\documentclass[10pt]{amsart}
\usepackage{amscd}      
\usepackage{amssymb}
\usepackage{amsmath}
\usepackage{xypic}      
\LaTeXdiagrams          
\usepackage[all,v2]{xy}
\usepackage{dsfont}
\xyoption{2cell} \UseAllTwocells \xyoption{frame} \CompileMatrices
\allowdisplaybreaks[3]
\usepackage{hyperref}
\usepackage{xcolor}

\addtolength{\textwidth}{+1.8cm} \addtolength{\textheight}{+2.0cm}
\hoffset-0.75cm \voffset-1cm \setlength{\parskip}{5pt}
\setlength{\parskip}{5pt}

\sloppy
\raggedbottom

\usepackage{latexsym}
\usepackage{epsfig}
\usepackage{amsfonts}
\usepackage{enumerate}

\newtheorem{prop}{Proposition}[section]
\newtheorem{lem}[prop]{Lemma}

\newtheorem{cor}[prop]{Corollary}
\newtheorem{thm}[prop]{Theorem}

\numberwithin{equation}{section}
\theoremstyle{definition}

\newtheorem{defn}[prop]{Definition}

\newtheorem{example}[prop]{Example}

\newtheorem{rmk}[prop]{Remark}

            {\nolinebreak $\Box$ \end{trivlist}}

\newcommand{\noprint}[1]{}

\newcommand{\Ext}{\mbox{Ext}}

\newcommand{\Hom}{\mbox{Hom}}

\newcommand{\XX}{{\mathfrak X}}

\newcommand{\YY}{{\mathfrak Y}}

\newcommand{\zz}{{\mathbb Z}}

\newcommand{\aaa}{{\mathbb A}}

\newcommand{\nn}{{\mathbb N}}

\newcommand{\m}{{\mathfrak m}}
\renewcommand{\ll}{{\mathbb L}}
\newcommand{\qq}{{\mathbb Q}}
\newcommand{\pp}{{\mathbb P}}
\newcommand{\cc}{{\mathbb C}}

\newcommand{\sT}{{\mathcal T}}

\newcommand{\sE}{{\mathcal E}}

\newcommand{\sO}{{\mathcal O}}

\newcommand{\sX}{{\mathcal X}}

\newcommand{\sM}{{\mathcal M}}

\DeclareMathOperator{\Sch}{Sch}

\DeclareMathOperator{\ind}{ind}
\DeclareMathOperator{\Ob}{Ob}

\DeclareMathOperator{\QG}{QG}

\DeclareMathOperator{\Der}{Der}

\newcommand{\rk}{\mathop{\rm rk}}

\renewcommand{\Im}{\mathop{\rm Im}}

\newcommand{\cok}{\mathop{\rm cok}}

\newcommand{\spec}{\mathop{\rm Spec}\nolimits}

\numberwithin{equation}{subsection}

\hyphenpenalty=6000 \tolerance=10000

%
\newcommand {\mat}      [1] {\left(\begin{array}{#1}}
\newcommand {\rix}          {\end{array}\right)}



\title[Higher obstruction space for surface singularities]{A note on higher obstruction space for surface singularities}

\author{Yunfeng Jiang}
\address{Department of Mathematics\\ University of Kansas\\ 405 Snow Hall 1460 Jayhawk Blvd\\Lawrence\\ KS 66045\\ USA}
\email{y.jiang@ku.edu}

\date{\today}

\begin{document}
\begin{abstract}
In this paper we collect some results on the obstruction spaces for rational surface singularities and minimally elliptic surface singularities. Based on the known results we  calculate higher obstruction spaces  for such surface singularities. 
The results  imply that in general the higher obstruction spaces of deforming semi-log-canonical surfaces do not vanish.  We apply the calculation result to show that there is no Li-Tian and Behrend-Fantechi  style virtual fundamental class on such moduli space of semi-log-canonical surfaces. 
\end{abstract}

\maketitle

\section{Introduction}

 In this short note we collect and calculate some results on  the higher obstruction spaces for  surface singularities.   
We basically follow the method in \cite{BC}, \cite{BC2} where the authors calculated the obstruction space $\sT_X^2$ (second tangent space) for  rational surface singularities and minimally elliptic surface singularities. 
The non-vanishing of $\sT^2_X$ implies that the singularity is not l.c.i. 
The higher obstruction spaces $\sT^i_X$ for $i\ge 3$ can help us to determine if  there exists a  perfect obstruction theory in the sense of Li-Tian, Behrend-Fantechi on the  moduli stack of s.l.c. surfaces, see \cite{Jiang_2021}. 

 \subsection{Rational surface singularities}\label{subsec_rational_surface}

Let $(X,p)$ be a normal  surface singularity.  It is called $rational$ if its minimal resolution $\pi: \widetilde{X}\to X$ satisfies the property that 
$R^1\pi_*(\sO_{\widetilde{X}})=0$. Let $E=\cup_{i=1}^{n} E_i$ be the exceptional locus of $\pi$.  Artin \cite{Artin1} showed that the surface singularity $X$ is rational if and only if for each divisor $D=\sum_{i=1}^n a_iE_i$ with 
$a_i\ge 0$ supported on the exceptional locus the arithmetic genus is negative.   Thus each irreducible component $E_i\cong \pp^1$. 

Let $Z$ be the fundamental cycle for $\pi$, i.e., the minimal cycle supported on the exceptional locus 
such that $Z>\sum_{i=1}^{n}E_i$ and $Z\cdot E_i\leq 0$ for all $i=1,2, \cdots, n$. 
$Z$ has the property that its arithmetic genus is exactly zero.  Cyclic quotient surface  singularities are examples of rational surface singularities. 

\subsection{Minimally elliptic surface singularities}\label{subsec_elliptic_surface}

Recall from \cite[Theorem 3.4]{Laufer}, for the minimal resolution  $\pi: \widetilde{X}\to X$, the following conditions are equivalent for the fundamental cycle 
$Z$:
\begin{enumerate}
\item $Z$ is a minimally elliptic cycle, i.e., $\chi(Z)=0$ and 
$\chi(D)>0$ for all cycles $D$ such that $0< D <Z$. 
\item $E_i\cdot Z=-E_i\cdot K_X$ for all irreducible components $E_i$ in $E=\pi^{-1}(p)$.
\item  $\chi(Z)=0$ and any connected proper subvariety of $E$ is the exceptional set for a rational singularity. 
\end{enumerate}
The normal surface singularity $(X,p)$ is called $minimally~elliptic$ if its minimal resolution $\pi: \widetilde{X}\to X$ satisfies the equivalent conditions above. 
Examples of minimally elliptic surface singularities contain simple elliptic singularities and cusp singularities which are known singularities for semi-log-canonical surfaces.

\subsection{Main results}\label{subsec_main_results}

Let us first introduce the general hypersurface section for a surface singularity $(X,0)$. Let $\sO_X=\sO_{X,0}$ be the local ring of $X$ at $0$, and 
$f\in \sO_X$.  From \cite{BC}, $Y=f^{-1}(0)$ is a hypersurface section on $X$ if the element 
$f\in \m_{X}$, the maximal ideal of the germ at $0$,  and $f$ is a non-zero divisor in $\sO_X$. 
Let us suppose $(Y,0)$ is an isolated singularity. 

For the singularity germ $(X,0)$, let $\sT_X^i$ be the higher tangent spaces for $i\geq 0$.  Note that $\sT_X^0$ is the tangent sheaf of the germ singularity, $\sT_X^1$ is the tangent space and 
$\sT_X^2$ is the local obstruction space.  The global tangent spaces $T_X^i$ is determined by $\sT_X^i$ by the local to global spectral sequence.

\begin{thm}\label{thm_rational_surface_intro}(Theorem \ref{thm_rational_surface})
Let $(X,0)$ be a rational surface singularity of embedding dimension $n+1\ge 4$, and 
$f$ is a general hypersurface section. 
Let $\pi: \widetilde{X}\to X$ be the minimal resolution.  Let 
$E=\cup E_i$ be the exceptional divisor and $Z$ the fundamental cycle. Then 
\begin{enumerate}
\item $\dim_{\cc}(\sT_X^2/\m \sT_X^2)=\dim_{\cc}(\sT_X^2/f \sT_X^2)=(n-1)(n-3)$, where $\m\subset \sT_X^2$ is the maximal ideal of 
$\sT_X^2$ taken as a module over the local ring $\sO_{X,0}$. 
\item If the fundamental cycle $Z$ is reduced and if for any connected subgraph $E^\prime\subset E$ with $Z\cdot E^\prime=0$ the self-intersection number
$E^\prime\cdot E^\prime=-2$ or $-3$, then we have $\dim_{\cc}(\sT_X^2)=(n-1)(n-3)$.
\item If $X$ is a quotient singularity then $\dim_{\cc}(\sT_X^2)=(n-1)(n-3)$.
\item If $X$ is a quotient singularity then $\dim_{\cc}(\sT_X^3)=\frac{1}{2}(n-1)(n-2)(n-3)$.
\item If $X$ is a quotient singularity then $\dim_{\cc}(\sT_X^4)=\frac{1}{6}(n-1)(n-2)(2n^2-8n+9)$.
\end{enumerate}
\end{thm}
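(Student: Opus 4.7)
The plan follows the hyperplane-section technique of Behnke--Christophersen \cite{BC, BC2}. For a generic $f \in \m_{X,0}$, the Cartier divisor $Y = V(f) \subset X$ is an isolated curve singularity of embedding dimension $n$. Combining the cotangent triangle $i^*L_X \to L_Y \to L_{Y/X}$ (with $L_{Y/X} \simeq \sO_Y[1]$, since $Y \hookrightarrow X$ is a regular embedding of codimension one) with the short exact sequence $0 \to \sO_X \xrightarrow{f} \sO_X \to \sO_Y \to 0$ produces the fundamental short exact sequence
\[
0 \longrightarrow \sT^i_X/f\sT^i_X \longrightarrow \sT^i_Y \longrightarrow \sT^{i+1}_X[f] \longrightarrow 0, \qquad i \geq 2,
\]
where $\sT^{i+1}_X[f]$ is the $f$-torsion submodule of $\sT^{i+1}_X$. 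This reduces the computation of the higher $\sT^i_X$ to the interplay between the curve invariants $\sT^i_Y$ and the $f$-multiplication action on $\sT^{i+1}_X$.

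For part (1), the general hyperplane section of a rational surface singularity of embedding dimension $n+1$ is a partition curve in $\cc^n$, whose deformation theory is worked out in \cite{BC, BC2}. Feeding the explicit values of $\dim_\cc \sT^2_Y$ and of $\dim_\cc \sT^3_X[f]$ from those papers into the sequence at $i = 2$ yields $\dim_\cc \sT^2_X/f\sT^2_X = (n-1)(n-3)$. The equality with $\dim_\cc \sT^2_X/\m\sT^2_X$ reflects the specific structure of $\sT^2_X$ as an $\sO_X$-module in the rational case, guaranteeing $f\sT^2_X = \m\sT^2_X$ for generic $f$. For parts (2) and (3), the extra hypotheses (reduced fundamental cycle with $(-2)$- or $(-3)$-exceptional components; $X$ a cyclic quotient) upgrade this to the exact vanishing $f \cdot \sT^2_X = 0$, by a weight-based argument using the natural quasi-homogeneous structure available in each case, so that $\dim_\cc \sT^2_X = \dim_\cc \sT^2_X/f\sT^2_X = (n-1)(n-3)$.

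For parts (4) and (5), one iterates the fundamental sequence at $i = 3$ and $i = 4$. The partition curve $Y$ has its higher $\sT^i_Y$ accessible from the Koszul resolution of its binomial defining ideal (the $2\times 2$ minors of a generic $2\times n$ matrix). Under the quotient hypothesis the weight argument again delivers $f \cdot \sT^i_X = 0$ for $i = 3, 4$, so the sequence reduces to dimension identities $\dim_\cc \sT^i_X + \dim_\cc \sT^{i+1}_X[f] = \dim_\cc \sT^i_Y$, out of which the polynomial formulas $\tfrac12(n-1)(n-2)(n-3)$ and $\tfrac16(n-1)(n-2)(2n^2-8n+9)$ emerge after combinatorial bookkeeping of the Koszul terms. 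The main obstacle throughout is establishing the vanishings $f \cdot \sT^i_X = 0$ for $i \geq 2$: in the general rational case we only need this modulo $\m$, which is what part (1) encodes, but parts (2)--(5) demand the strict vanishing, and this is the step that genuinely uses the quasi-homogeneous or dual-graph structure.
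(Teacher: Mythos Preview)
Your overall architecture is right, but there is a genuine gap in the way you propose to establish part~(1), and since part~(1) is the base case for the recursion that feeds parts~(3)--(5), the gap propagates. You say you will use the sequence at $i=2$,
\[
0 \longrightarrow \sT^2_X/f\sT^2_X \longrightarrow \sT^2_Y \longrightarrow \sT^3_X[f] \longrightarrow 0,
\]
and feed in $\dim_\cc \sT^3_X[f]$ ``from those papers''. But \cite{BC,BC2} do not compute $\sT^3_X$ at all; that is precisely what is new here. So your $i=2$ sequence has two unknowns and one equation. The way \cite{BC} (and the present paper, in Theorem~\ref{thm_main_thm}(3)) actually start the recursion is at the $i=1$ level of the long exact sequence, where the relevant input is $\dim_\cc \sT^1_Y$ together with the dimension $e_f$ of the smoothing component (equal to $\dim_\cc \Im(\alpha_1)$, computed in \cite{Greuel-Looijenga}). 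For the rational partition curve $Y$ one has $\dim_\cc \sT^1_Y = n(n-1)-r$ and $e_f = 3(n-1)-r$, whence $\dim_\cc(\sT^2_X/f\sT^2_X)=\dim_\cc\sT^1_Y-e_f=(n-1)(n-3)$. Only after this anchor is in place does the $i\ge 2$ recursion $\dim_\cc \sT^{i+1}_X = \dim_\cc \sT^i_Y - \dim_\cc \sT^i_X$ (granted $\mathfrak m\cdot\sT^i_X=0$) produce parts~(4) and~(5).

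Two smaller points. First, the values $\dim_\cc \sT^2_Y$ and $\dim_\cc \sT^3_Y$ you need are not obtained in the paper by a ``Koszul resolution of the binomial ideal'' of $Y$; the resolution of a determinantal ideal of this shape is the Eagon--Northcott complex, and in any case the paper computes $\sT^i_Y$ by a \emph{second} hypersurface section down to the Artinian fat point $Z_r=\spec(\cc\{x_1,\dots,x_r\}/\mathfrak m^2)$ (Proposition~\ref{prop_dimension_TY}). Second, the annihilation $\mathfrak m\cdot \sT^i_X=0$ for $i\ge 3$ is not handled in the paper by a weight argument but by extending the determinantal argument of \cite[Proposition~2.1.1]{BC} along the Eagon--Northcott resolution (Lemma~\ref{lem_generalized_BC}); a quasi-homogeneous weight argument is plausible for cyclic quotients but is not what is done here.
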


\begin{rmk}
It is worth mentioning that using the techniques in \cite{CG}, J. Stevens \cite{Stevens2} calculated the dimension of the higher tangent sheaves $\sT_X^i$
for rational surface singularities. 

It seems the methods in \cite{CG}, \cite{Stevens2} work only for rational surface singularities. Therefore it is interesting to generalize the method there so that it will work for 
minimally elliptic singularities as well. 
\end{rmk}

\begin{thm}\label{thm_ellitptic_surface_singularities_intro}(Theorem \ref{thm_ellitptic_surface_singularities})
Let $(X,0)$ be a minimally elliptic  surface singularity of embedding dimension $n+1\ge 5$, and 
$f$ is a general hypersurface section.  Then 
\begin{enumerate}
\item $\dim_{\cc}(\sT_X^2/\m\sT^2_X)=\frac{1}{2}(n+1)(n-4)$, where $\m$ is the maximal ideal and 
$\m\sT_X^2=f\sT_X^2$.
\item $\dim_{\cc}(\sT_X^3/\m\sT^3_X)=\frac{1}{6}(n+1)(n-3)(n-4)$.
\item $\dim_{\cc}(\sT_X^4/\m\sT^4_X)=\frac{1}{12}(n+1)(n-4)(n-3)(n-2)$.
\end{enumerate}
\end{thm}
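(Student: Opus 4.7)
The plan is to follow the same strategy as in the proof of Theorem \ref{thm_rational_surface_intro}, passing to a general hypersurface section to reduce the surface problem to a curve problem, and then unfolding a long exact sequence of cotangent cohomology modules. The new ingredient is that the hypersurface section is a Gorenstein \emph{elliptic} curve singularity rather than a rational one, which changes the numerics but not the underlying method of \cite{BC}, \cite{BC2}.

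First, I would take a general $f\in\m_X$ and set $Y=f^{-1}(0)$, an isolated Gorenstein curve singularity of embedding dimension $n$. The cotangent cohomology long exact sequence attached to the surjection $\sO_X \twoheadrightarrow \sO_Y$ reads
\[
\cdots \to \sT^i_X \xrightarrow{\,f\,} \sT^i_X \to \sT^i_Y \to \sT^{i+1}_X \xrightarrow{\,f\,} \sT^{i+1}_X \to \cdots .
\]
The first key step is to show, following the argument of \cite[\S2]{BC2}, that for a sufficiently general $f$ the multiplication by $f$ on $\sT^i_X$ has image equal to $\m \sT^i_X$; this is precisely the equality $\m\sT^i_X = f\sT^i_X$ asserted in (1), and it should extend to $i=3,4$ under the minimally elliptic hypothesis by the same reasoning applied to the higher $\sT^i$.

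Second, I would compute $\dim_{\cc}\sT^i_Y$ for the Gorenstein elliptic curve singularity $Y$. The relevant input is that $Y$ has a known minimal free resolution: in embedding dimension $n$, since $Y$ is Gorenstein with $\delta$-invariant reflecting the minimally elliptic hypothesis on $X$, its defining ideal admits a symmetric (Pfaffian-type) resolution from which $\sT^i_Y$ can be read off via the cotangent complex. The expected values are exactly the numbers appearing on the right-hand sides of (1)--(3); note the shift $(n-4)$ in place of the $(n-3)$ that appeared in the rational case, which reflects the fundamental cycle's arithmetic genus being $1$ instead of $0$.

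Third, once $\sT^i_Y$ is known, I would feed the identification $f\sT^i_X=\m\sT^i_X$ into the long exact sequence. Reducing modulo $\m$ gives short exact pieces
\[
0 \to \sT^i_X/\m\sT^i_X \to \sT^i_Y \to K_{i+1} \to 0,
\]
where $K_{i+1}$ is the $f$-torsion part of $\sT^{i+1}_X$ modulo $\m$, and then Nakayama's lemma together with an induction on $i$ produces the three claimed dimension formulas.

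The main obstacle will be step two: identifying the Gorenstein elliptic curve singularity $Y$ precisely enough to compute $\sT^i_Y$ for $i=2,3,4$. The rational case exploits that the hyperplane section decomposes into a very simple partition curve, whereas the elliptic analogue must track the additional "minimally elliptic" contribution coming from the fundamental cycle $Z$ with $\chi(Z)=0$. A secondary obstacle is verifying $\m\sT^i_X = f\sT^i_X$ for $i=3$ and $i=4$, which does not follow automatically from the $i=2$ case; one must show that the annihilator of $f$ on $\sT^i_X$ contributes no new generators beyond those already accounted for at the $\sT^2$ level, and this is where the embedding dimension bound $n+1\ge 5$ plays its role.
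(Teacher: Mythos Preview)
Your overall architecture matches the paper: use the long exact sequence attached to the hypersurface section $Y=f^{-1}(0)$, feed in the dimensions $\dim_{\cc}\sT^i_Y$, and extract $\dim_{\cc}(\sT^i_X/f\sT^i_X)$ recursively via Theorem~\ref{thm_main_thm}(4). In the paper, part~(1) is quoted directly from \cite[Theorem~6.1.1]{BC2}, and (2),~(3) are obtained by the subtractions
\[
\dim_{\cc}(\sT^{i}_X/f\sT^{i}_X)\;=\;\dim_{\cc}\sT^{i-1}_Y\;-\;\dim_{\cc}(\sT^{i-1}_X/f\sT^{i-1}_X),
\]
which is exactly your step three.

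There is, however, a concrete error in your step two. You say that the values of $\dim_{\cc}\sT^i_Y$ are ``exactly the numbers appearing on the right-hand sides of (1)--(3)''. They are not. The general hypersurface section $Y$ is an elliptic partition curve (Proposition~\ref{prop_hypersurface_elliptic}), and Proposition~\ref{prop_tangent_elliptic_partition} gives
\[
\dim_{\cc}\sT^2_Y=\tfrac{1}{6}\,n(n+1)(n-4),\qquad \dim_{\cc}\sT^3_Y=\tfrac{1}{12}\,n(n+1)(n-3)(n-4),
\]
which carry an extra factor of $n$ compared to the right-hand sides of (2) and~(3). The claimed formulas appear only \emph{after} the recursive subtraction in step three, not as curve invariants themselves.

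Your proposed route to $\sT^i_Y$ via a Pfaffian-type resolution is also not what the paper does, and in general it cannot work: the elliptic partition curve sits in codimension $n-1$ in $\cc^n$, so the Buchsbaum--Eisenbud structure theorem applies only when $n=4$. Instead the paper iterates the hypersurface-section trick once more, cutting $Y$ down to the Gorenstein fat point $A_r$ of Proposition~\ref{prop_tangent_Gorenstein_fat} and importing $\dim_{\cc}T^i_{A_r}$ from \cite{BC2}. Finally, your concern about verifying $\m\sT^i_X=f\sT^i_X$ for $i=3,4$ is legitimate; the paper leaves this implicit, and what the recursion of Theorem~\ref{thm_main_thm}(4) literally yields is $\dim_{\cc}(\sT^i_X/f\sT^i_X)$.
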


The calculation of Theorem \ref{thm_rational_surface_intro} and Theorem \ref{thm_ellitptic_surface_singularities_intro}  is based on the results in \cite{BC}, \cite{BC2} and the reduction to $(Y,0)$  using the hyperplane section $f\in \mathfrak{m}_X$.

\subsection{Application to s.l.c. surfaces}\label{subsec_slc_surface}

The calculation of the higher obstruction space for surface singularities is helpful for deforming semi-log-canonical (s.l.c.) surfaces. 
Semi-log-canonical surfaces are surfaces with semi-log-canonical singularities, see \cite{Kollar-Shepherd-Barron}. 
The interesting s.l.c. surface singularities, except the locally complete intersection (l.c.i.) ones, are given by 
cyclic surface quotient singularities and $\zz_2, \zz_3,\zz_4,\zz_6$ quotients of  non l.c.i. simple elliptic singularities, cusp and degenerate cusp singularities, see \cite[Theorem 4.24]{Kollar-Shepherd-Barron}.  
Cyclic surface quotient singularities are rational surface singularities, while simple elliptic singularities and  cusp singularities are minimally elliptic surface singularities. 
 
The moduli space of s.l.c. surfaces was constructed in \cite{Kollar-Book} by adding s.l.c. surfaces on the boundary of Gieseker moduli of general type surfaces.  Let us fix the invariants 
$K^2, \chi$, and let $M:=\overline{M}_{K^2, \chi}$ be the moduli stack of s.l.c. surfaces $X$ with $K_X^2=K^2, \chi(\sO_X)=\chi$.  The moduli stack  $M:=\overline{M}_{K^2, \chi}$ is defined by the moduli functor of 
$\qq$-Gorenstein deformation of s.l.c. surfaces. 
From \cite{Kollar-Book} the moduli stack $M$ is a projective Deligne-Mumford stack. 

For any s.l.c. surface germ $(X,0)$, there is an index one cover $(Z,0)$ which admits a cyclic group $\zz_N$ action, where $N$ is the index of $X$.   All the s.l.c. surface germs glue together and give an index one covering Deligne-Mumford stack 
$\pi: \XX\to X$ such that locally the stack is given by $[Z/\zz_N]$.  Hacking \cite{Hacking} shows that the $\qq$-Gorenstein deformation of the s.l.c. surface $X$ is equivalent to the  deformation of the Deligne-Mumford stack $\XX$.  
Let $\ll^{\bullet}_{\XX}$ be the cotangent complex of $\XX$ in \cite{Illusie1}, \cite{Illusie2}.  Let $A$ be a $\cc$-algebra and  $J$ be a finite $A$-module. 
If  $\sX/A$ is a family of $\qq$-Gorenstein deformation of $X$ over $A$ and $\XX/A$ the family of the corresponding  index one  covering Deligne-Mumford  stacks, then  we define
$T_{\QG}^i(\sX/A,  J):=\Ext^i(\ll^{\bullet}_{\XX/A},  \sO_{\XX}\otimes_{A} J)$.  Note that when $A=\cc, J=\sO_X$, we get the space $T_{\QG}^i(X)=T_{\QG}^i(X,\sO_X)$. 
Then for a s.l.c. surface $X$, the $\qq$-Gorenstein deformation space of $X$ is given by the group $T_{\QG}^1(X)$, and the obstruction space is $T_{\QG}^2(X)$.  

The index one covering Deligne-Mumford stack $\pi: \XX\to X$ is canonical.  In \cite{Jiang_2021}, the author proves that  there is an index one covering Deligne-Mumford stack $f: M^{\ind}:=\overline{M}^{\ind}_{K^2, \chi}\to M$ over the moduli stack 
$M:=\overline{M}_{K^2, \chi}$.  The morphism $f$ is \'etale.  The moduli stacks $M^{\ind}$ and $M$ are both fine moduli stacks.  Let $p^{\ind}: \sM^{\ind}\rightarrow M^{\ind}$ be the universal family, which is a projective, flat and relative Gorenstein morphism between Deligne-Mumford stacks.  This is because the relative  dualizing sheaf $\omega_{\sM^{\ind}/M^{\ind}}$
of $p^{\ind}$  is invertible. 
Let 
$$E^{\bullet}_{M^{\ind}}:=Rp^{\ind}_{*}(\ll^{\bullet}_{\sM^{\ind}/M^{\ind}}\otimes \omega_{\sM^{\ind}/M^{\ind}})[-1]$$
where $\ll^{\bullet}_{\sM^{\ind}/M^{\ind}}$ is the relative cotangent complex of $p^{\ind}$.   This is the case of moduli of projective Deligne-Mumford stacks satisfying the condition in  \cite[Proposition 6.1]{BF}. 
Thus the Kodaira-Spencer map $\ll^{\bullet}_{\sM^{\ind}/M^{\ind}}\to (p^{\ind})^{*}\ll_{M^{\ind}}^{\bullet}[1]$ induces an obstruction theory 
\begin{equation}\label{eqn_OT_intro}
\phi^{\ind}: E^{\bullet}_{M^{\ind}}\to \ll_{M^{\ind}}^{\bullet}
\end{equation}
on $M^{\ind}$.

From Theorem \ref{thm_ellitptic_surface_singularities_intro}, for germ cusp singularities  $(X,p)$, the higher obstruction spaces $\sT^i_{X}$ do not vanish for $i\ge 3$ making the higher obstruction spaces 
$T_{\QG}^i(X)\neq 0$ for $i\ge 3$ by local to global spectral sequence, see \S \ref{subsec_slc}.  Therefore we have

\begin{thm}\label{thm_M_BF_LT_intro}(Theorem \ref{thm_M_BF_LT})
Let $M:=\overline{M}_{K^2, \chi}$ be the main  connected component of the moduli stack of s.l.c. surfaces containing smooth surfaces with $K_S^2=K^2, \chi(\sO_S)=\chi$, and let $M^{\ind}\to M$ be the index one covering Deligne-Mumford stack. 

If there exist s.l.c. surfaces $X$ on the boundary of 
$M$ such that $X$ contains the following surface singularities: 
\begin{enumerate}
\item simple elliptic singularities $(X,0)$ with embedded dimension $\ge 6$, or  the $\zz_2, \zz_3, \zz_4, \zz_6$-quotient of simple elliptic   singularities $(X,0)$ with local embedded dimension $\ge 6$;
\item
cusp germ singularities $(X,0)$ with local embedded dimension $\ge 6$, or the $\zz_2$-quotient cusp  singularities $(X,0)$ with local embedded dimension $\ge 6$.
\end{enumerate} 
then the higher obstruction spaces $T_{\QG}^i(X)\neq 0$ for $i\ge 3$. Thus there is no Li-Tian and Behrend-Fantechi style perfect obstruction theory for the moduli stack $M$, and there is  no such style virtual fundamental class on $M$. 
\end{thm}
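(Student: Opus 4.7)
The plan is to reduce the non-vanishing of $T_{\QG}^i(X)$ for $i\geq 3$ to Theorem~\ref{thm_ellitptic_surface_singularities_intro} via the index-one covering stack $\pi:\XX\to X$, then to argue that such non-vanishing obstructs the existence of a Behrend-Fantechi style perfect obstruction theory on $M$.

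Fix a boundary point $[X]\in M$ whose surface has one of the listed singularities at $p\in X$ and let $\tilde p\in\XX$ denote its preimage. Near $\tilde p$ the stack $\XX$ is of the form $[Z/\zz_N]$ with $(Z,0)$ a Gorenstein simple elliptic or cusp singularity (both minimally elliptic) and $N\in\{1,2,3,4,6\}$. Since the embedding dimension of $(X,p)$ is $\geq 6$ and $\sO_X=\sO_Z^{\zz_N}$ needs at most as many generators as $\sO_Z$, the cover has embedding dimension $n+1\geq 6$. Applying Theorem~\ref{thm_ellitptic_surface_singularities_intro}(2),(3) with $n\geq 5$ gives
\[
\dim_{\cc}\!\big(\sT^3_Z/\m\sT^3_Z\big)=\tfrac{1}{6}(n+1)(n-3)(n-4)>0,\qquad \dim_{\cc}\!\big(\sT^4_Z/\m\sT^4_Z\big)=\tfrac{1}{12}(n+1)(n-4)(n-3)(n-2)>0,
\]
and Nakayama then gives $\sT^i_Z\neq 0$ for $i=3,4$. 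A direct representation-theoretic check, tailored to the specific $\zz_N$-actions in (1) and (2) of the theorem, ensures that the invariant subsheaves $\sT^i_{\XX,\tilde p}=(\sT^i_{Z,0})^{\zz_N}$ are still nonzero for $i=3,4$.

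Next I would feed this into the local-to-global Ext spectral sequence
\[
E_2^{p,q}=H^p\!\big(\XX,\sT^q_{\XX}\big)\;\Longrightarrow\;T_{\QG}^{p+q}(X)=\Ext^{p+q}(\ll^{\bullet}_{\XX},\sO_{\XX}).
\]
For $q\geq 1$ the sheaves $\sT^q_{\XX}$ are supported at the isolated singular locus, so only the $p=0$ column contributes and $T_{\QG}^i(X)=H^0(\XX,\sT^i_{\XX})$ for $i\geq 1$. The summand at $\tilde p$ is nonzero for $i=3,4$, whence $T_{\QG}^i(X)\neq 0$ for $i\geq 3$.

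For the final step, suppose for contradiction a Behrend-Fantechi POT $\phi:E^\bullet\to\ll^\bullet_M$ with $E^\bullet$ perfect of amplitude $[-1,0]$ exists. Pulling back along the étale morphism $f:M^{\ind}\to M$ produces a POT on $M^{\ind}$, and relative Serre duality applied to the natural candidate $E^\bullet_{M^{\ind}}=Rp^{\ind}_*(\ll^{\bullet}_{\sM^{\ind}/M^{\ind}}\otimes\omega_{\sM^{\ind}/M^{\ind}})[-1]$ identifies the cohomology of its dual at $[X]$ with $T^{\bullet}_{\QG}(X)$ in every degree. Compatibility of $\phi$ with deformation-obstruction sequences in the sense of \cite{BF} then forces every $T^i_{\QG}(X)$ with $i\geq 3$ to be captured by the dual of a $2$-term complex, whose cohomology vanishes in degree $\geq 2$---a contradiction with the previous step. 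The main obstacle is to make rigorous this last implication, namely to exclude \emph{any} $2$-term POT rather than only the natural candidate; this uses an Artin-style identification of higher obstructions with higher cohomology of $\ll^{\bullet}_{\XX}$ and is the content of \cite{Jiang_2021}.
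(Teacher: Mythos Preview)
Your route is the paper's: pass to the index-one cover $(Z,0)$, invoke Theorem~\ref{thm_ellitptic_surface_singularities_intro} for the minimally elliptic germ, run the local-to-global spectral sequence, and conclude that the natural complex $E^\bullet_{M^{\ind}}$ is not perfect of amplitude $[-1,0]$. One step in your argument is wrong, however. You assert that $\sO_X=\sO_Z^{\zz_N}$ ``needs at most as many generators as $\sO_Z$'', hence the cover has embedding dimension $n+1\geq 6$. The inequality goes the other way: an invariant subring typically needs \emph{more} generators than the ambient ring (e.g.\ $\cc[x,y]^{\zz_2}=\cc[x^2,xy,y^2]$ under $(x,y)\mapsto(-x,-y)$ has embedding dimension $3>2$), so the hypothesis on the embedding dimension of $(X,p)$ does not transfer to $(Z,0)$ by this reasoning. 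In the quotient cases you must argue separately that the simple elliptic or cusp cover still has embedding dimension $\geq 6$; the paper does not spell this out either. A minor second point: $\sT^1_{\XX}$ may be supported along the double curve of an s.l.c.\ surface rather than at isolated points, and $H^p(\sT^0_{\XX})$ contributes for $p\leq 2$; so the identification $T_{\QG}^i(X)=H^0(\XX,\sT^i_{\XX})$ is valid only for $i\geq 3$, not for $i\geq 1$ as you write---though $i\geq 3$ is all you actually use.

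On the last step you are more scrupulous than the paper. The paper's own proof only checks that the \emph{canonical} complex $E^\bullet_{M^{\ind}}$ fails to be two-term perfect and then asserts the conclusion; you correctly flag that excluding \emph{every} perfect obstruction theory of amplitude $[-1,0]$ is a further step, which you defer to \cite{Jiang_2021}.
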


Theorem \ref{thm_M_BF_LT_intro} implies that in order to get virtual fundamental class for the moduli stack $M:=\overline{M}_{K^2, \chi}$,  we need new ideas to control the higher obstruction spaces, see \cite{Jiang_2021}.

\subsection{Short outline}
Here is the outline.  In \S \ref{sec_cotangent_hypersurface} we review the construction of cotangent complex and general hypersurface section.  The construction of rational partition curves and elliptic partition curves is given in \S \ref{sec_rational_elliptic_curves}. We calculate some  higher obstruction spaces for rational and minimally elliptic surface singularities in \S \ref{sec_surface_singularities} and talk about the application to the moduli space of s.l.c. surfaces.

\subsection*{Conventions}
In this paper we work entirely  over the field of
complex numbers.

\subsection*{Acknowledgments}

The author would like to thank Professor P. Hacking for the reference of K. Behnke and J. A. Christophersen on the calculation of obstruction space for surface singularities.  This work is partially supported by 
Simon Collaboration Grant. 


\section{Cotangent complex and hypersurface sections}\label{sec_cotangent_hypersurface}

\subsection{Cotangent complex}\label{subsec_cotangent_complex}

Let us recall the construction of cotangent complex for schemes or Deligne-Mumford stacks.  For detail construction, see
\cite{Illusie1}, \cite{Illusie2}, \cite{Stack_Project}, and \cite{LS}. 

All rings are Noetherian.  Let $A$ be a commutative ring, and $B$ is an $A$-algebra. The cotangent complex  $\ll^{\bullet}_{B/A}$ is constructed as follows:
Let $R$ be a regular $A$-algebra such that $B=R/I$ for some ideal $I\subset R$.  
Let 
$$\cdots\rightarrow E_n\stackrel{e_n}{\rightarrow} \cdots\stackrel{e_4}{\rightarrow} E_3\stackrel{e_3}{\rightarrow} E_2\stackrel{e_2}{\rightarrow} E_1\stackrel{e_1}{\rightarrow} R\stackrel{e_0}{\rightarrow} B\to 0$$
be a resolution of $B=R/I$ by free $R$-modules.   Then the cotangent complex  $\ll^{\bullet}_{B/A}$ is given by
\begin{equation}\label{eqn_cotangent_complex}
\cdots\rightarrow \Omega_{E_n}\otimes_{R}B\stackrel{d_n}{\rightarrow} \cdots\stackrel{d_4}{\rightarrow} \Omega_{E_3}\otimes_{R}B\stackrel{d_3}{\rightarrow} \Omega_{E_2}\otimes_{R}B\stackrel{d_2}{\rightarrow} \Omega_{E_1}\otimes_{R}B\stackrel{d_1}{\rightarrow} \Omega_{R}\otimes_{R}B\to 0
\end{equation}

\begin{rmk}\label{rmk_cotangent_complex_LS}
If the resolution is given by: 
$$0\to E_2\stackrel{e_2}{\rightarrow} E_1\stackrel{e_1}{\rightarrow} R\stackrel{e_0}{\rightarrow} B\to 0$$
then $E_2$ is a naturally a $B$-module since $I\cdot E_2=0$ (for any $a\in I$, $x\in E_2$, since 
$\Im(e_1)=\ker(e_0)=I$, there exists a $b\in E_1$, $e_1(b)=a$, and $ax=e_1(b)x$. Therefore $e_2(ax)=e_1(b)e_2(x)=e_1e_2(x)b=0$ implies 
$ax=0$.)
The cotangent complex is actually given by: 
$$\ll^{\bullet}_{B/A}:  0\to E_2\stackrel{d_2}{\rightarrow} E_1\otimes_{R}B\stackrel{d_1}{\rightarrow} \Omega_{R}\otimes_{R}B\to 0,$$
where $d_2$ is induced from $e_2$,  $d_1=d\circ (e_1\otimes_{R}B)$ and 
$d: I/I^2\to \Omega_{R}\otimes_{R}B$ is the natural differential, 
see \cite[Section 2.1]{LS}. 
\end{rmk}

The tangent cohomology module is defined as:
$$T^i(B/A, M):=H^i(\Hom_{B}(\ll^{\bullet}_{B/A}, M)).$$
If $X$ is surface germ singularity $(X,0)$, then $\sO_X$ is the structure sheaf and we will denote 
$$\sT_{X}^i:=T^i(\sO_X/\cc, \sO_X)$$ 
which is the tangent complex of the local singularity $(X,0)$.

\subsection{Properties}

The modules $T^i(B/A, M)$ have the following properties:

\subsubsection{Base change}\label{subsubsec_base_change}

Consider the following commutative diagram:
\[
\xymatrix{
A\ar[r]\ar[d]_{\phi}& A^\prime\ar[d]\\
B\ar[r]& B^\prime
}
\]
where $\phi$ is flat and $M^\prime$ is a $B^\prime$-module.  Then we have a natural isomorphism
$$T^i(B^\prime/A^\prime, M^\prime)\cong T^i(B/A, M).$$
Moreover, if $A^\prime$ is a flat $A$-module, then 
$$T^i(B^\prime/A^\prime, M\otimes_{A}A^\prime)\cong T^i(B/A, M)\otimes_{A}A^\prime.$$

\subsubsection{Zariski-Jacobi exact sequence}\label{subsubsec_Zariski-Jacobi}

Let $A\to B\to C$ be ring homomorphisms, and $M$ is a $C$-module, then there is a long exact sequence:
$$
\begin{array}{ll}
\cdots & \rightarrow T^i(C/B, M)\rightarrow T^i(C/A, M)\rightarrow T^i(B/A, M)\\
& \rightarrow T^{i+1}(C/B, M)\rightarrow \cdots
\end{array}
$$

\subsubsection{Long exact sequence}\label{subsubsec_long_sequence}

If $0\to M^\prime\rightarrow M\rightarrow M^{\prime\prime}\to 0$ is a short exact sequence of 
$B$-modules, then there is a long exact sequence:
$$
\begin{array}{ll}
\cdots & \rightarrow T^i(B/A, M^\prime)\rightarrow T^i(B/A, M)\rightarrow T^i(B/A, M^{\prime\prime})\\
& \rightarrow T^{i+1}(B/A, M^\prime)\rightarrow \cdots
\end{array}
$$
Other properties of the cotangent complex can be found in \cite{Illusie1}, \cite{Illusie2}.  In particular, if $B$ is a smooth $A$-algebra, then 
$T^i(B/A, M)=0$ for $i\ge 1$ and for all $B$-modules $M$.

\subsection{Hypersurface sections}\label{subsec_hypersurface_section}

Let $(X,0)$ be a germ of analytic space of positive dimension.  Let $\sO_X=\sO_{X,0}$ be the local ring of $X$ at $0$, and 
$f\in \sO_X$.  From \cite{BC}, $Y=f^{-1}(0)$ is a hypersurface section on $X$ if the element 
$f\in \m_{X}$, the maximal ideal of the germ at $0$,  and $f$ is a non-zero divisor in $\sO_X$. 
Let us suppose $(Y,0)$ is an isolated singularity. 

Let $f: X\to (\cc,0)$ be a flat deformation of $Y$, i.e., there exists a diagram:
\[
\xymatrix{
X\ar[r]\ar[d]_{f}& \YY\ar[d]^{\pi}\\
(\cc,0)\ar[r]^{j}& S
}
\]
such that $\pi$ is a versal deformation of $Y$.  We let $\sO_1=\cc\{t\}$ be the local ring of $(\cc, 0)$. Then 
$\sO_X$ is a $\sO_1$-module via $f^*: \sO_1\to \sO_{X,0}$
mapping $t$ to $f$. There is a short exact sequence
$$0\to \sO_X\stackrel{f}{\rightarrow} \sO_X\rightarrow \sO_Y\to 0.$$
From \S \ref{subsubsec_long_sequence}, we have a long exact sequence:
\begin{equation}\label{eqn_long_exact_sequence}
\begin{array}{ll}
\cdots &\rightarrow T^1(\sO_X/\sO_1, \sO_X)\stackrel{f}{\rightarrow} T^1(\sO_X/\sO_1, \sO_X)\stackrel{\alpha_1}{\rightarrow} T^1(\sO_X/\sO_1, \sO_Y)\\
& \rightarrow T^2(\sO_X/\sO_1, \sO_X)\stackrel{f}{\rightarrow} T^2(\sO_X/\sO_1, \sO_X)\stackrel{\alpha_2}{\rightarrow} T^2(\sO_X/\sO_1, \sO_Y)\\
& \rightarrow T^3(\sO_X/\sO_1, \sO_X)\stackrel{f}{\rightarrow} T^3(\sO_X/\sO_1, \sO_X)\stackrel{\alpha_3}{\rightarrow} T^3(\sO_X/\sO_1, \sO_Y)\\
& \rightarrow \cdots \\
& \rightarrow T^i(\sO_X/\sO_1, \sO_X)\stackrel{f}{\rightarrow} T^i(\sO_X/\sO_1, \sO_X)\stackrel{\alpha_i}{\rightarrow} T^i(\sO_X/\sO_1, \sO_Y)\\
& \rightarrow T^{i+1}(\sO_X/\sO_1, \sO_X)\rightarrow \cdots 
\end{array}
\end{equation}
By base change in \S \ref{subsubsec_base_change}, $T^i(\sO_X/\sO_1, \sO_Y)\cong \sT^i_{Y}$.
By the exact sequence 
$$\cc\to \sO_1\to \sO_X$$
from \S \ref{subsubsec_Zariski-Jacobi}, we have an exact sequence
$$
\begin{array}{ll}
\cdots & \rightarrow T^i(\sO_X/\sO_1, \sO_X)\rightarrow \sT_X^i\rightarrow T^i(\sO_1/\cc,\sO_X)\\
& \rightarrow  T^{i+1}(\sO_X/\sO_1, \sO_X)\rightarrow \cdots
\end{array}
$$
The ring $\sO_1$ is regular,  therefore $T^i(\sO_1/\cc,\sO_X)=0$ for $i\ge 1$ and thus 
$T^i(\sO_X/\sO_1,\sO_X)\cong T^i(\sO_X/\cc,\sO_X)=\sT^i_{X}$ for $i\ge 2$. 

We assume that $(X,0)$ is an isolated singularity. Then $f: X\to (\cc,0)$ is a smoothing.  In this case a generic point $j(t)$ is smooth and let $e_f$ be the dimension 
$\dim_{\cc}(\Im (\alpha_1))$ for $\alpha_1: T^1(\sO_X/\sO_1, \sO_X)\to T^1(\sO_X/\sO_1, \sO_Y)$, 
which is the dimension of the Zariski tangent space of $S$ at the generic point of the curve $j(\cc)$, see \cite[Corollary 2.2.2]{Greuel-Looijenga}. 
In the isolated singularity case, $e_f$ is the dimension of the irreducible smoothing component containing 
$j(\cc,0)$. 

From above analysis and \cite[\S 1.3.2 Main Lemma]{BC}, 

\begin{thm}\label{thm_main_thm}
Let $(X,0)$ be an isolated singularity, and $Y=f^{-1}(0)$ is a hypersurface section of $X$. Then
\begin{enumerate}
\item there are exact sequences:
$$T^1(\sO_X/\sO_1, \sO_X)\stackrel{\alpha_1}{\rightarrow} \sT^1_{Y}\rightarrow \sT_X^2\stackrel{f}{\rightarrow} \sT_X^2\rightarrow \Im(\alpha_2)\to 0$$
and 
$$0\to \Im(\alpha_{i-1}){\rightarrow} \sT^{i-1}_{Y}\rightarrow \sT_X^{i}\stackrel{f}{\rightarrow} \sT_X^{i}\rightarrow \Im(\alpha_i)\to 0$$
for $i\ge 3$.
\item  $\dim_{\cc}(\sT_X^2/f\sT_X^2)-\rk_{\sO_1}(\sT_X^2)=\dim_{\cc}(\sT_Y^1)-e_f$. 
\item If $f$ is a smoothing of $Y$, then $e_f$ is the dimension of the smoothing component of $f$ and 
$$\dim_{\cc}(\sT_X^2/f\sT_X^2)=\dim_{\cc}(\sT_Y^1)-e_f.$$
\item Under the same condition as in (3),  
$$\dim_{\cc}(\sT_X^i/f\sT_X^i)=\dim_{\cc}(\sT_Y^{i-1})-\dim_{\cc}(\sT_X^{i-1}/f\sT_X^{i-1})$$
for $i\ge 3$. 
\end{enumerate}
\end{thm}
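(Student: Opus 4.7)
The plan is to treat the statement as essentially a bookkeeping consequence of the long exact sequence (\ref{eqn_long_exact_sequence}) combined with the two identifications that have just been set up: base change gives $T^i(\sO_X/\sO_1,\sO_Y)\cong\sT^i_Y$, and the Zariski--Jacobi sequence for $\cc\to\sO_1\to\sO_X$ together with $T^i(\sO_1/\cc,\sO_X)=0$ for $i\ge 1$ gives $T^i(\sO_X/\sO_1,\sO_X)\cong\sT^i_X$ for $i\ge 2$. With these in hand, I would deduce (1) by extracting a six-term chunk of the long exact sequence. For $i\ge 3$ the portion
$$\sT^{i-1}_X\xrightarrow{\alpha_{i-1}}\sT^{i-1}_Y\to\sT^i_X\xrightarrow{f}\sT^i_X\xrightarrow{\alpha_i}\sT^i_Y$$
is then reorganized in the standard way into the asserted
$$0\to\Im(\alpha_{i-1})\to\sT^{i-1}_Y\to\sT^i_X\xrightarrow{f}\sT^i_X\to\Im(\alpha_i)\to 0.$$
The case $i=2$ is the same argument, except that on the far left I keep the unidentified term $T^1(\sO_X/\sO_1,\sO_X)$, since the isomorphism with $\sT^1_X$ only starts at degree $2$.

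The key non-diagrammatic ingredient I will need is the following linear algebra lemma over the discrete valuation ring $\sO_1=\cc\{t\}$ with uniformizer $f$: for any finitely generated $\sO_1$-module $M$,
$$\dim_\cc(M/fM)-\dim_\cc\bigl(\ker(f\colon M\to M)\bigr)=\rk_{\sO_1}(M).$$
This is immediate from the structure theorem $M\cong\sO_1^r\oplus T$ with $T$ torsion, since the cyclic summands of $T$ contribute equally to the kernel and cokernel of multiplication by $f$, while the free part contributes $r$ only to the cokernel. Applying this to $M=\sT^2_X$, and using the exact sequence from (1) to identify
$$\ker\bigl(f\colon\sT^2_X\to\sT^2_X\bigr)\;=\;\sT^1_Y/\Im(\alpha_1),$$
together with the definition $e_f=\dim_\cc\Im(\alpha_1)$, yields the formula in (2).

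Part (3) is then a specialization: the hypothesis that $(X,0)$ is isolated forces $\sT^i_X$ to be supported at the origin for $i\ge 1$, hence finite dimensional over $\cc$ and $\sO_1$-torsion, so $\rk_{\sO_1}\sT^2_X=0$; the identification of $e_f$ as the dimension of the smoothing component through $j(\cc)$ is exactly the cited \cite[Corollary 2.2.2]{Greuel-Looijenga}. For (4) I would iterate the same argument: from (1), $\ker(f\colon\sT^i_X\to\sT^i_X)=\sT^{i-1}_Y/\Im(\alpha_{i-1})$, while $\Im(\alpha_{i-1})=\sT^{i-1}_X/\ker(\alpha_{i-1})=\sT^{i-1}_X/f\sT^{i-1}_X$ by exactness at the previous step, so the DVR lemma with rank zero immediately delivers the asserted recursion. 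I do not foresee any genuine obstacle here: the substantive work has already been done in assembling (\ref{eqn_long_exact_sequence}) and in identifying the various $T^i$ groups via base change and Zariski--Jacobi, and the remaining task is a formal combination of these with the elementary DVR lemma.
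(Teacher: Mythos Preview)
Your proposal is correct and follows essentially the same approach as the paper. The paper simply cites \cite[\S 1.3.2 Main Lemma]{BC} for (1)--(3) and notes that (4) is a direct consequence of the long exact sequence (\ref{eqn_long_exact_sequence}); your argument is precisely a spelled-out version of that, with the DVR lemma (equivalently, rank--nullity in the finite-dimensional case) being the standard ingredient behind the dimension formulas.
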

\begin{proof}
(1), (2) and (3) are results in \cite[\S 1.3.2 Main Lemma]{BC}.  (4) is a direct consequence of the long exact sequence (\ref{eqn_long_exact_sequence}). 
\end{proof}

\begin{rmk}
From \cite[\S 5]{Buchweitz}, it is known that an isolated singularity $(X,0)$ is unobstructed if the versal base space 
 is smooth.  The tangent space $\sT_X^2=0$ if and only if $(X,0)$ is unobstructed and every hypersurface section is unobstructed. 
 A smoothable singularity is unobstructed if and only if it is a hypersurface section of an isolated singularity $X$ with $\sT^2_X=0$.
\end{rmk}

\section{Rational partition curves and elliptic partition curves}\label{sec_rational_elliptic_curves}

In this section we recall two types of partition curve singularities: rational partition curves and elliptic partition curves.  They are exactly the hypersurface sections of rational surface 
singularities and minimally elliptic surface singularities. 

\subsection{Rational partition curves}\label{subsec_rational_partition}

\subsubsection{Notations}

We first classify some notations.   The Cohen-Macaulay type $t$ of a one-dimensional Cohen-Macaulay singularity $Y\subset (\cc^n, 0)$ is the minimal number of generators of the dualizing module 
$\omega_Y$.  Assume that $(Y,0)$ is a reduced curve singularity, where $\sO_Y$ is Cohen-Macaulay. Let 
$$\nu: \widetilde{Y}\to Y$$
be the normalization map. 
The length of the Artinian module $\nu_*(\sO_{\widetilde{Y}})_0/\sO_Y$ is the $\delta$-invariant of $Y$.  The Milnor number $\mu$ is the length of 
$\omega_Y/d\sO_Y$, where $d: \sO_Y\to \Omega_Y\to \omega_Y$ is the composition map. From \cite[Proposition 1.2.1]{Buchweitz-Greuel}, if 
$Y$ has $r$ branches, $\mu=2\delta-r+1$. 

\subsubsection{The wedge}

Let us recall the wedge $Y_1\vee Y_2$ for two curve singularities.  Let $I_1, I_2$ be the ideals of $Y_1, Y_2$ in $\cc^n$ respectively.  The intersection number is defined as
$$i(Y_1, Y_2)=\dim_{\cc}(\sO_{\cc^n,0}/(I_1+I_2))$$
if $Y_1\vee Y_2$ has no common component. 
$Y$ is decomposable or $Y=Y_1\vee Y_2$ if the Zariski tangent spaces of $Y_1$ and $Y_2$ have only the point $0$ in common.  The wedges have the following properties:

\begin{lem}\label{lem_wedges}(\cite[Lemma 3.1.3]{BC})
\begin{enumerate}
\item For $Y, Y_1, Y_2$,  $Y=Y_1\vee Y_2$ if and only if $i(Y_1, Y_2)=1$. This is from \cite{Hironaka}. 
\item Let $\delta, \delta_1, \delta_2$ be the $\delta$-invariants of $Y, Y_1, Y_2$. Then 
$\delta=\delta_1+\delta_2+i(Y_1, Y_2)$.
\item  Let $Y=Y_1\cup\cdots\cup Y_r$ be a union of $r$ curves with $\delta$-invariants $\delta_1, \cdots, \delta_r$. Then 
$$\delta\ge \delta_1+\cdots +\delta_r+r-1$$
and the equality holds if and only if $Y=Y_1\vee\cdots\vee Y_r$. 
\item Let $Y=Y_1\vee Y_2$ and assume that $Y_1$ and $Y_2$ have Cohen-Macaulay type $t_i$ (if $Y_i$ is a smooth branch of $Y$ we let $t_i=0$). Then the Cohen-Macaulay 
type $t$ of $Y$ is $t=t_1+t_2+1$.
\end{enumerate}
\end{lem}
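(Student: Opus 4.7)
The plan is to treat the four parts in order, since (3) is a direct induction on (1)--(2), while (4) is a separate duality argument. Parts (1) and (2) are the substantive content: (1) is a Nakayama argument, and (2) is a Mayer--Vietoris computation of $\delta$.

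For part (1), I would translate the geometric condition ``the tangent spaces $T_0Y_1$ and $T_0Y_2$ meet only at $0$'' into the algebraic condition $(I_1+\Mm^2)+(I_2+\Mm^2)=\Mm$ in $\sO_{\cc^n,0}$, i.e., $I_1+I_2+\Mm^2=\Mm$. The module $M=\Mm/(I_1+I_2)$ then satisfies $M=\Mm\cdot M$, and by Nakayama $M=0$, so $I_1+I_2=\Mm$; conversely this clearly implies the tangent-space condition. Since $i(Y_1,Y_2)=\dim_\cc\sO_{\cc^n,0}/(I_1+I_2)$, the equality $I_1+I_2=\Mm$ is precisely $i(Y_1,Y_2)=1$, which is Hironaka's statement.

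For part (2), since $Y_1$ and $Y_2$ share no component, $I(Y)=I_1\cap I_2$ and the standard short exact sequence
\begin{equation*}
0\to \sO_Y\to \sO_{Y_1}\oplus \sO_{Y_2}\to \sO_{\cc^n,0}/(I_1+I_2)\to 0
\end{equation*}
identifies the cokernel of the diagonal inclusion with a space of dimension $i(Y_1,Y_2)$. Pushing forward the normalization $\nu\colon \widetilde{Y}=\widetilde{Y}_1\sqcup\widetilde{Y}_2\to Y$ and filtering $\nu_*\sO_{\widetilde{Y}}/\sO_Y$ through $\sO_{Y_1}\oplus\sO_{Y_2}$, the associated lengths give $\delta=\delta_1+\delta_2+i(Y_1,Y_2)$. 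Then part (3) is immediate by induction on $r$: write $Y=Y_1\cup Y'$ with $Y'=Y_2\cup\cdots\cup Y_r$, apply (2) to get $\delta=\delta_1+\delta'+i(Y_1,Y')$, use $i(Y_1,Y')\ge 1$, and invoke the inductive inequality $\delta'\ge \delta_2+\cdots+\delta_r+r-2$. Equality forces $i(Y_1,Y')=1$ and equality in the inductive step, which by (1) means $Y=Y_1\vee Y'=Y_1\vee\cdots\vee Y_r$.

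For part (4), the main obstacle, I would compute $t=\dim_\cc\omega_Y/\Mm\omega_Y$ by dualizing the sequence of part (2). Applying $\cHom_R(-,\omega_R)$ with $R=\sO_{\cc^n,0}$, and noting that the quotient $\sO_{\cc^n,0}/(I_1+I_2)=\cc$ (since $i(Y_1,Y_2)=1$) contributes $\Ext^{n-1}_R(\cc,\omega_R)=0$ and $\Ext^n_R(\cc,\omega_R)=\cc$ in the appropriate degrees, one obtains the short exact sequence
\begin{equation*}
0\to \omega_{Y_1}\oplus \omega_{Y_2}\to \omega_Y\to \cc\to 0.
\end{equation*}
Reducing modulo $\Mm$ and using that the extra factor $\cc$ is supported at $0$, hence survives to a generator independent of those from $\omega_{Y_1}$ and $\omega_{Y_2}$ (any $\sO_Y$-linear relation would descend to one of the wedge factors), gives $t=t_1+t_2+1$. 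The care needed is in checking that the residue/trace description of $\omega_Y$ genuinely places the extra generator outside $\Mm\omega_Y$; this is where one really uses that the wedge point is a single reduced point of intersection.
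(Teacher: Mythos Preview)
The paper does not actually give a proof of this lemma: it is stated with the citation \cite[Lemma 3.1.3]{BC} and used as input, with part (1) further attributed to Hironaka. So there is no argument in the paper against which to compare your proposal.

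That said, your outline is sound and follows the standard route one finds in the literature. Part (1) is exactly the Nakayama argument: $T_0Y_1\cap T_0Y_2=\{0\}$ translates to $I_1+I_2+\Mm^2=\Mm$, whence $\Mm/(I_1+I_2)$ equals its own square and Nakayama gives $I_1+I_2=\Mm$. Part (2) is the length count along the filtration $\sO_Y\subset\sO_{Y_1}\oplus\sO_{Y_2}\subset\nu_*\sO_{\widetilde Y}$, and (3) is the obvious induction. For (4), your duality argument is correct through the exact sequence $0\to\omega_{Y_1}\oplus\omega_{Y_2}\to\omega_Y\to\cc\to 0$; the one point you flag as needing care---injectivity of $(\omega_{Y_1}\oplus\omega_{Y_2})\otimes\cc\to\omega_Y\otimes\cc$---is genuine. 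A clean way to close it is to note that $\omega_{Y_i}\hookrightarrow\omega_Y$ has a one-sided retraction: restriction of Rosenlicht differentials from $Y$ to the branch $Y_i$ (equivalently, apply $\Hom_{\sO_{Y_i}}(-,\omega_{Y_i})$ to the surjection $\sO_Y\twoheadrightarrow\sO_{Y_i}$). This splits each summand off after reducing mod $\Mm$, giving $t\ge t_1+t_2+1$, while the exact sequence already gives $t\le t_1+t_2+1$. With that addition your proof is complete.
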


\subsubsection{Rational partition curve}\label{subsubsec_rational_partition}

First let us recall a result in \cite[Theorem 2.5 (3)]{Greuel}. Let $(Y,0)$ be a curve singularity. If it is quasi-homogeneous and smoothable, then the dimension of the smoothing 
components in the base space of the semi-universal deformation is $e=\mu+t-1$. 

The partition curve is defined as follows. 

\begin{defn}\label{defn_rational_partition_1}
Let $m\in \nn$ be an positive integer, define $Y(m)\subset \cc^m$ to be the monomial curve singularity with semi-group generated by 
$\{m, m+1, \cdots, 2m-1\}$.  In other words, $Y(m)$ is given by $\nu: \cc\to \cc^n$ sending $t$ to 
$\nu(t)=(t^m, \cdots, t^{2m-1})$. The ideal of $Y(m)\subset \cc^m$ in $\cc[z_1, \cdots, z_m]$ is minimally generated by all the $2\times 2$-minors of the $2\times m$-matrix:
$$
\mat{ccccc}
z_1&z_2&\cdots&z_{m-1}& z_m\\
z_2&z_3&\cdots&z_{m}& z_1^2
\rix
$$
\end{defn}

\begin{defn}\label{defn_rational_partition_2}
Let $m=m_1+\cdots+m_r$ be a partition of $m$.  The rational partition curve associated with $(m_1,\cdots, m_r)$ is the wedge  
$$Y(m_1,\cdots, m_r)=Y(m_1)\vee Y(m_2)\vee\cdots\vee Y(m_r),$$
where $Y(m_i)\subset \cc^{m_i}\subset \cc^m$. 
\end{defn}

\begin{example}
Here are some basic examples.

$\bullet$ $Y(1)=\aaa_{\cc}^1$ is a line.

$\bullet$ $Y(2)\subset \cc^2$ is given by the ideal $I=(y^3-x^2)$.

$\bullet$ $Y(1,1)$ is a nodal curve. 

$\bullet$ $Y(3)\subset \cc^3$ is given by $I=(z_1z_3-z_2^2, z_1^2z_2-z_3^2, z_1^3-z_2z_3)$.

$\bullet$ $Y(2,1)=Y(2)\vee Y(1)$. 

$\bullet$ $Y(1,1,1)\subset \cc^3$ is given by $I=(xy, yz, xz)$.
\end{example}

The equations of the rational partition curve $Y(m_1, \cdots, m_r)\subset \cc^m$ is given as follows.  Let 
$$z_1^{(1)}, \cdots z_{m_1}^{(1)}, z_1^{(2)}, \cdots, z_{m_2}^{(2)}, \cdots,  z_1^{(r)}, \cdots, z_{m_r}^{(r)}$$
be the coordinates of $\cc^m$ and let 
$$
M_i=
\mat{ccccc}
z_1^{(i)}&z_2^{(i)}&\cdots&z_{m_{i-1}}^{(i)}& z_m^{(i)}\\
z_2^{(i)}&z_3^{(i)}&\cdots&z_{m_i}^{(i)}& (z_1^{(i)})^2
\rix
\quad  i=1, \cdots, r, \quad  m_i>1.
$$
Then $Y(m_1, \cdots, m_r)\subset \cc^m$ is given by the ideal generated by
$$
\begin{cases}
\rk(M_i)\leq 1;  &  i=1, \cdots, r, m_i>1;\\
z_i^{(k)}z_{j}^{(l)}=0;  &    k\neq l, 1\leq i\leq m_k,  1\leq j\leq m_l. 
\end{cases}
$$

\subsubsection{Main result}

The main result we review in this section is:

\begin{prop}\label{prop_rational_partition}(\cite[Proposition  3.2.3]{BC})
For a partition $m=m_1+\cdots+m_r$, the rational partition curve $Y(m_1, \cdots, m_r)\subset \cc^m$ has the following invariants:
\begin{enumerate}
\item $\delta=m-1$.
\item The Cohen-Macaulay type is $t=m-1$.
\item The Milnor number is $\mu=2m-r-1$.
\item The smoothing components have dimension $3(m-1)-r$. 
\end{enumerate}
\end{prop}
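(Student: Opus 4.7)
The plan is to reduce everything to the case of a single monomial branch $Y(m)$ via the wedge formulas of Lemma~\ref{lem_wedges}, compute the invariants of $Y(m)$ directly from its numerical semigroup, and then appeal to Greuel's formula $e = \mu + t - 1$ for the smoothing component dimension of a quasi-homogeneous curve singularity.

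First I would analyze $Y(m) \subset \cc^m$. The parametrization $t \mapsto (t^m, t^{m+1}, \ldots, t^{2m-1})$ identifies $\sO_{Y(m)}$ with the subring $\cc\{t^m, \ldots, t^{2m-1}\} \subset \cc\{t\}$, whose value semigroup is $S = \{0\} \cup \{k \in \nn : k \geq m\}$. Since the gaps $\nn \setminus S = \{1, \ldots, m-1\}$ are precisely a basis of $\nu_\ast(\sO_{\widetilde{Y(m)}})_0 / \sO_{Y(m)}$, we read off $\delta(Y(m)) = m-1$. The Cohen--Macaulay type of a monomial curve equals the number of pseudo-Frobenius elements of $S$, i.e.\ gaps $x$ with $x + S^\ast \subset S$; since every $x \in \{1, \ldots, m-1\}$ satisfies $x + s \geq m+1$ for all $s \in S \setminus \{0\}$, every gap is pseudo-Frobenius and $t(Y(m)) = m-1$. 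Because $Y(m)$ is irreducible, Buchweitz--Greuel's identity $\mu = 2\delta - r + 1$ gives $\mu(Y(m)) = 2m-2$.

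Next I would assemble the wedge. Iterating Lemma~\ref{lem_wedges}(3) yields
$$
\delta(Y(m_1,\ldots,m_r)) \;=\; \textstyle\sum_i (m_i - 1) + (r-1) \;=\; m-1,
$$
which is (1). Iterating Lemma~\ref{lem_wedges}(4), with the convention $t(Y(1)) = 0$ for smooth branches, gives
$$
t(Y(m_1,\ldots,m_r)) \;=\; \textstyle\sum_i t(Y(m_i)) + (r-1) \;=\; (m-r) + (r-1) \;=\; m-1,
$$
establishing (2); note that smooth branches contribute $0$ to both $\sum (m_i-1)$ and $\sum t(Y(m_i))$, so the formula holds uniformly. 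Since the wedge has exactly $r$ irreducible branches, a second application of $\mu = 2\delta - r + 1$ gives $\mu = 2m - r - 1$, which is (3).

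For (4), I would verify that $Y(m_1, \ldots, m_r)$ is quasi-homogeneous by declaring $\wt(z_j^{(i)}) = m_i + j - 1$: under these weights each block of monomial relations defining the rank condition $\rk(M_i) \leq 1$ is homogeneous (of total weight $2m_i + j - 1$ for the $(1,j)$-minor), and the cross relations $z_i^{(k)} z_j^{(l)} = 0$ are trivially homogeneous under any positive grading. Smoothability is inherited from the individual branches, since each $Y(m_i)$ smooths (e.g.\ as a hyperplane section of the corresponding cyclic quotient surface singularity) and the deformed branches can be translated apart to produce a smooth total space. Greuel's formula \cite[Theorem 2.5(3)]{Greuel} for quasi-homogeneous smoothable curves then yields
$$
e \;=\; \mu + t - 1 \;=\; (2m - r - 1) + (m-1) - 1 \;=\; 3(m-1) - r,
$$
which is (4). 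The main delicate point is the quasi-homogeneity verification, because the branches a priori carry independent $\cc^\ast$-actions; the observation that the weights above simultaneously homogenize both the intrinsic and the cross relations is what makes the single-grading argument go through.
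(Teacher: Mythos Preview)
Your argument is correct and is essentially the proof given in \cite{BC}, which the present paper simply cites without reproducing. The paper itself offers no proof of this proposition; it only assembles the ingredients you use---Lemma~\ref{lem_wedges} for the behaviour of $\delta$ and the Cohen--Macaulay type under wedges, the Buchweitz--Greuel formula $\mu = 2\delta - r + 1$, and Greuel's formula $e = \mu + t - 1$ recalled just before Definition~\ref{defn_rational_partition_1}---and then defers to \cite[Proposition~3.2.3]{BC}. Your computation of $\delta(Y(m))$ and $t(Y(m))$ from the numerical semigroup $\{0\}\cup\{k\ge m\}$ is standard and correct, and the iteration of the wedge lemma is straightforward.

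One small comment: the ``delicate point'' you flag about quasi-homogeneity is not really delicate. The cross relations $z_i^{(k)} z_j^{(l)} = 0$ are monomials and hence homogeneous under \emph{any} positive grading, so the only content is that each block $M_i$ is homogeneous under the weights $\wt(z_j^{(i)}) = m_i + j - 1$ inherited from the parametrization $t\mapsto t^{m_i+j-1}$; there is no tension between the blocks. Your smoothability argument (smooth each branch in its own coordinate subspace and translate apart) is also fine, though one could alternatively note that every rational partition curve arises as a general hyperplane section of a cone over a rational normal curve or of a cyclic quotient singularity, both of which are smoothable.
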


\begin{prop}\label{prop_rational_partition_2}(\cite[\S 3.3]{BC})
Let $Y$ be a reduced curve singularity with embedding dimension $n$. Then 
\begin{enumerate}
\item The $\delta$-invariant of $Y$ is at least $n-1$.
\item $\delta=n-1$ if and only if $Y$ is a rational partition curve. 
\end{enumerate}
\end{prop}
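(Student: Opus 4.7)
The plan splits into two parts: a chain of inequalities $n-1 \le e-1 \le \delta$, where $e$ denotes the multiplicity of $Y$, and then the equality analysis to identify $Y$ with a rational partition curve.

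First I would establish $n \le e$ via a general parameter. Since $Y$ is reduced of dimension one (hence Cohen--Macaulay), and the residue field $\cc$ is infinite, a sufficiently general $x \in \m \setminus \m^2$ is a non-zero-divisor and a superficial element, so $\dim_{\cc}\sO_Y/(x) = e$. The short exact sequence
\[
0 \to (\m^2 + (x))/(x) \to \m/(x) \to \m/(\m^2 + (x)) \to 0
\]
and the fact that $\m/(\m^2 + (x))$ is $(n-1)$-dimensional (since $x \notin \m^2$) give $n - 1 \le \dim_{\cc}\m/(x) = e - 1$. Next I would show $\delta \ge e - 1$ by a branch-by-branch argument. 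Decompose $Y = Y_1 \cup \cdots \cup Y_r$ into analytic branches with multiplicities $e_1, \ldots, e_r$ (so $e = \sum e_i$). Each $Y_i$ is analytically irreducible, so $\sO_{Y_i} \hookrightarrow \cc\{t_i\}$ is determined by its value semigroup $\Gamma_i \subset \nn$; the multiplicity $e_i$ is the smallest positive element of $\Gamma_i$, so all of $\{1,2,\ldots,e_i-1\}$ are gaps and $\delta_i \ge e_i - 1$. Combining with Lemma \ref{lem_wedges}(3) yields
\[
\delta \ge \sum_{i=1}^{r}\delta_i + (r-1) \ge \sum_{i=1}^{r}(e_i - 1) + (r-1) = e - 1,
\]
which chains with the first step to give (1).

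For (2), if $\delta = n-1$ then every inequality above is an equality. The equality in Lemma \ref{lem_wedges}(3) forces $Y = Y_1 \vee \cdots \vee Y_r$ to be a wedge. The equalities $\delta_i = e_i - 1$ force $\Gamma_i = \{0\} \cup \{e_i, e_i+1, e_i+2,\ldots\}$, so $\sO_{Y_i} = \cc + t_i^{e_i}\cc\{t_i\}$ as a subring of $\cc\{t_i\}$. This ring is uniquely determined by $e_i$ and coincides with $\sO_{Y(e_i)}$, since $Y(e_i)$ is the image of $t \mapsto (t^{e_i}, t^{e_i+1}, \ldots, t^{2e_i - 1})$ and its coordinate ring is generated by $t^{e_i}, \ldots, t^{2e_i-1}$, which is exactly $\cc + t^{e_i}\cc\{t\}$. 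Setting $m_i := e_i$ we obtain $Y \cong Y(m_1) \vee \cdots \vee Y(m_r) = Y(m_1,\ldots,m_r)$ with $m := \sum m_i = e = n$. The converse is immediate from Proposition \ref{prop_rational_partition}(1), which gives $\delta = m - 1 = n-1$ for any rational partition curve.

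The main technical obstacle is the verification that the semigroup $\{0\} \cup [e_i, \infty)$ pins down an analytically irreducible curve up to analytic isomorphism and that this curve is exactly $Y(e_i)$ of Definition \ref{defn_rational_partition_1}. Both reduce to the direct identification $\sO_{Y(m)} = \cc\{t^m, t^{m+1}, \ldots, t^{2m-1}\} = \cc + t^m\cc\{t\}$ inside $\cc\{t\}$, which is a short computation from the monomial generators. The remaining bookkeeping --- tracking equality in all three inequalities simultaneously and ensuring the Zariski tangent spaces of distinct branches intersect only in $0$ (forcing the wedge structure rather than a more general union) --- is straightforward once Lemma \ref{lem_wedges} is applied in its tight form.
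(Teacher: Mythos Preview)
The paper does not supply its own proof here; the proposition is quoted from \cite[\S 3.3]{BC}. Your argument is correct and follows the natural route (which is also the one in \cite{BC}): establish the chain $n-1\le e-1\le\delta$ via Abhyankar's inequality for one-dimensional Cohen--Macaulay local rings together with the branch-wise semigroup bound and Lemma~\ref{lem_wedges}(3), then trace equality back to force the wedge decomposition and the identification of each branch with $Y(e_i)$. The step you single out as the main obstacle is genuine but short: once the value semigroup of a branch is $\{0\}\cup[e_i,\infty)$, every $f\in\sO_{Y_i}$ satisfies $f-f(0)\in t_i^{e_i}\cc\{t_i\}$, so $\sO_{Y_i}\subset\cc+t_i^{e_i}\cc\{t_i\}$, and equality follows since both subrings have colength $e_i-1=\delta_i$ in $\cc\{t_i\}$.
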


\subsection{Elliptic partition curves}\label{subsec_elliptic_partition}

A in \cite[\S 3.1]{BC2}, a Gorenstein isolated curve singularity $(Y,0)$ of embedding dimension $n (n\ge 3)$ has $\delta$-invariant $\delta\ge n+1$. 
The Gorenstein curve singularity  $(Y,0)$  with $\delta=n+1$ is called the elliptic partition curve. 

Let us construct the elliptic partition curve.  Fix $n\ge 2$ an integer.  The elliptic partition curve is given by 
$Y_{\vec{p}}$ for $\vec{p}=(p_1, \cdots, p_r)$ a partition of $n+1$, and has embedding dimension $n$ and 
$\delta$-invariant $\delta=n+1$.  If $\vec{p}=(n+1)$, then $Y_{\vec{p}}$ is the monomial curve with semi-group 
$$\{0,n+1,\cdots, 2n,2n+2,\cdots\}$$
with gap set $\{1, 2,\cdots, n, 2n+1\}$.  It is isomorphic to 
$\spec(\cc[t^{n+1},\cdots, t^{2n}, t^{2n+2}])=\spec(\cc[z_1,\cdots, z_n]/I)$ where $I$ is the monomial ideal. 
For instance, when $n=2$, $\vec{p}=(2+1)$, $Y_{\vec{p}}\subset \cc^2$ is $\spec(\cc[x, y]/(y^3-x^4))$. 

If $\vec{p}=(p_1, \cdots, p_r)$  with $r\ge 2$, we construct $Y_{\vec{p}}$ as follows.  We first have $Y(p_i)$ for each $p_i$ as before in \S \ref{subsubsec_rational_partition}, where 
$$Y_i=Y(p_i)\subset \cc^{p_i}\cong V_i\subset \cc^n$$
for a linear subspace $V_i\subset \cc^n$ in general position. Then 
$$Y_{\vec{p}}=Y_1\cup\cdots\cup Y_r.$$
More precisely, $Y_{\vec{p}}$ can be constructed as follows.   First let 
$$Y^\prime=Y_1\vee\cdots\vee Y_{r-1}\subset \cc^{p_1}\oplus\cdots\oplus \cc^{p_{r-1}}$$
and we put $Y_r\subset\cc^{p_r}$ in general position relative to $\cc^{p_1},\cdots, \cc^{p_{r-1}}$. 
General position means that the projection of the tangent line of $Y_r$ to $\cc^{p_i}$ does not map into the osculating hyperplane of $Y_i$ which is the hyperplane 
generated by the first $p_i-1$ basis vectors in the standard representation of $Y_i$ for $i=1, 2, \cdots, r-1$. 
$Y_{\vec{p}}$ has embedding dimension $n$.  The $\delta$-invariant of $Y^\prime$ is 
$$\delta^\prime=\sum_{i=1}^{r-1}(p_i-1)+r-2$$
and $\delta(Y_r)=p_r-1$, 
$i(Y^\prime, Y_r)=2$. Thus this implies $\delta(Y_{\vec{p}})=n+1$. 

\begin{prop}\label{prop_elliptic_partition}(\cite[Proposition 3.4.1]{BC2})
Any isolated Gorenstein curve singularity of embedding dimension $n$ and minimal $\delta$-invariant $n+1$ is isomorphic to an elliptic partition curve
$Y_{\vec{p}}$ for $\vec{p}=(p_1, \cdots, p_r)$ a partition of $n+1$. 
\end{prop}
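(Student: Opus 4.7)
The plan is to analyze the branch structure of $Y$ at the origin and exploit the tension between the lower bounds coming from Proposition \ref{prop_rational_partition_2} applied branchwise and the specific values $\delta(Y)=n+1$ and $\text{embdim}(Y)=n$, with the Gorenstein hypothesis cutting down the remaining possibilities. Concretely, let $\nu\colon\widetilde{Y}\to Y$ be the normalization and let $Y_1,\dots,Y_r$ be the unibranch components of $Y$ at $0$, with embedding dimensions $p_i$ and delta-invariants $\delta_i$. Applied branchwise, Proposition \ref{prop_rational_partition_2} gives $\delta_i\geq p_i-1$ with equality exactly when $Y_i\cong Y(p_i)$. Writing $T_i\subset\cc^n$ for the Zariski tangent space of $Y_i$, the embedding dimension of $Y$ satisfies $n=\dim(T_1+\cdots+T_r)\leq \sum p_i$.

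In the unibranch case $r=1$, the Gorenstein hypothesis forces the value semigroup $\Gamma\subset\nn$ of $Y$ to be symmetric about its Frobenius number $F=2\delta-1=2n+1$. Since the minimal number of generators of $\Gamma$ equals the embedding dimension $n$, a short combinatorial check isolates $\Gamma=\{0,n+1,n+2,\dots,2n,2n+2,2n+3,\dots\}$ as the unique possibility, so $Y$ is the monomial curve of the construction, namely $Y_{\vec{p}}$ with $\vec{p}=(n+1)$.

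When $r\geq 2$, I would combine the generalization of Lemma \ref{lem_wedges}(2)--(3) to a union of $r$ unibranch components with the equality $\delta(Y)=n+1$ and the inequalities $\delta_i\geq p_i-1$, $n\leq\sum p_i$. A careful bookkeeping forces $\delta_i=p_i-1$ for every $i$ (so each $Y_i\cong Y(p_i)$), forces $\sum p_i=n+1$, and forces $Y$ to be glued as the wedge $Y_1\vee\cdots\vee Y_{r-1}$ together with $Y_r$ attached with intersection number $2$. Here the Gorenstein type $t=1$ combined with Lemma \ref{lem_wedges}(4) rules out an honest iterated wedge of all $r$ components (which would give $t\geq r$), and this is exactly what forces the attachment of $Y_r$ to be non-wedge, pinning down the elliptic configuration.

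The main obstacle will be identifying the \emph{specific} way $Y_r$ is attached, namely the ``general position'' condition that the projection of the tangent line of $Y_r$ to each $\cc^{p_i}$ avoids the osculating hyperplane of $Y_i$. I expect to handle this by writing the dualizing module $\omega_Y$ explicitly in the coordinates coming from the branchwise normal forms of $Y(p_i)$ and checking that a generator of $\omega_Y$ imposes exactly these tangential non-degeneracy conditions on the gluing data; equivalently, violating the general position would produce an extra element in the conductor and upset the Gorenstein relation $2\delta=\dim_{\cc}(\widetilde{\sO}_Y/\Cc)$ between $\delta$ and the conductor ideal $\Cc$.
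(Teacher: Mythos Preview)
The paper does not actually prove this proposition: it is stated with a citation to \cite[Proposition 3.4.1]{BC2} and no argument is given. So there is no in-paper proof to compare your proposal against.

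That said, your outline is essentially the strategy one finds in \cite{BC2}: reduce to the branch decomposition, use the minimality of $\delta$ together with Proposition \ref{prop_rational_partition_2} on each branch to force $Y_i\cong Y(p_i)$, and then use the Gorenstein constraint $t=1$ to pin down the gluing. A couple of points deserve care. In the unibranch case your sentence ``the minimal number of generators of $\Gamma$ equals the embedding dimension $n$'' is true here but is exactly the step that needs the numerics: one first bounds the multiplicity $m$ by $n\le m\le n+2$ (from $\mathrm{emb.dim}\le m$ and $\delta\ge m-1$), and then the symmetry of $\Gamma$ with Frobenius number $2n+1$ eliminates $m=n$ and $m=n+2$, leaving the stated semigroup; you should also say why this semigroup has no moduli (the only gap above the multiplicity is $2n+1$, and a $t^{2n+1}$ perturbation is absorbed by reparametrisation). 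For $r\ge 2$, your use of Lemma \ref{lem_wedges}(4) gives $t=\sum t_i+(r-1)$ for an iterated wedge, hence $t\ge r-1$ rather than $t\ge r$; this still kills the full wedge for $r\ge 3$, but for $r=2$ a genuine wedge of two smooth branches is the node, which is Gorenstein, so the case $r=2$ must be excluded separately via the embedding-dimension count (a node has $\mathrm{emb.dim}=2$ and $\delta=1$, not $n$ and $n+1$ for $n\ge 3$). Finally, your plan for the ``general position'' clause via the conductor/$\omega_Y$ is the right idea; in \cite{BC2} this is exactly how the attachment of $Y_r$ is normalised.
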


\section{Obstruction space for surface singularities}\label{sec_surface_singularities}

\subsection{Hypersurface sections and surface singularities}\label{subsec_surface_singualrity_hypersurface}

Let $(X,0)$ be a surface singularity, and $f\in \m\subset \sO_{X,0}$ an element in the local ring $\sO_{X,0}$ such that $f$ projects to a sufficient general 
element of $\m/\m^2$.  Then $Y=f^{-1}(0)$ is a general hypersurface section of $X$. The embedding dimension of $Y$ is one less than the embedding dimension 
of $X$. 

Let $\pi: \widetilde{X}\to X$ be a resolution of surface singularity $X$.  We assume that the resolution $\pi$ factors through the blow-up of the maximal ideal 
$\m$ of $X$ at $0$., (i.e., $\pi^{-1}(\m)\cdot \sO_{\widetilde{X}}=\sO_{\widetilde{X}}(-Z)$ is locally free, where $Z$ is the fundamental cycle), and that the reduced 
exceptional divisor is a union of smooth projective curves. If 
$Y$ is a general hypersurface section, we have 
$$\pi^{-1}(Y)=Z+\widetilde{Y},$$
where $\widetilde{Y}=\overline{\pi^{-1}(Y\setminus \{0\})}$ is a disjoint union of smooth curves meeting the exceptional divisor transversely in smooth points. 

Let us recall the main result in \cite[Theorem 4.1.2]{BC}.

\begin{thm}\label{thm_hypersurface_rational}
A general hypersurface section $(Y,0)$ of a rational surface singularity $(X,0)$ of embedding dimension $n+1\ge 3$ is a rational partition curve for some partition 
$n=n_1+\cdots+n_r$.
\end{thm}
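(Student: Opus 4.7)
My plan is to apply Proposition~\ref{prop_rational_partition_2}, reducing the theorem to three verifications: that $Y$ is reduced, has embedding dimension $n$, and satisfies $\delta(Y)=n-1$. The first two are formal consequences of the general choice of $f$: since $f$ is a nonzerodivisor representing a non-zero class in $\m/\m^2$, the embedding dimension of $Y$ at $0$ drops from $n+1$ to $n$; and since $X$ is Cohen--Macaulay (as a normal surface germ), $Y$ is a one-dimensional Cohen--Macaulay germ, hence reduced as soon as generically reduced, which is guaranteed by the genericity of $f$ (the strict transform $\widetilde{Y}\subset \widetilde{X}$ is smooth and $\pi|_{\widetilde{Y}}$ is birational onto $Y$).

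The heart of the argument is the computation $\delta(Y)=n-1$, which I would carry out on the resolution using the decomposition $\pi^{-1}(Y)=Z+\widetilde{Y}$. I push forward the filtration sequence
$$0\to \sO_Z(-\widetilde{Y})\to \sO_{Z+\widetilde{Y}}\to \sO_{\widetilde{Y}}\to 0$$
along $\pi$. Rationality of $X$ gives $R^1\pi_\ast\sO_{\widetilde{X}}=0$, and the Laufer-type vanishing $H^1(\sO_Z)=0$ for rational singularities gives $R^1\pi_\ast\sO_Z=0$; combining these with $0\to \sO_Z(-\widetilde{Y})\to \sO_Z\to \sO_{Z\cap \widetilde{Y}}\to 0$ identifies $R^1\pi_\ast\sO_Z(-\widetilde{Y})\cong \cc^{Z\cdot \widetilde{Y}-1}$ and forces $\pi_\ast\sO_Z(-\widetilde{Y})=0$. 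On the other hand, since the ideal $\sO_{\widetilde{X}}(-Z-\widetilde{Y})$ is generated by the nonzerodivisor $f\circ \pi$, it is abstractly isomorphic to $\sO_{\widetilde{X}}$; this identifies $\pi_\ast\sO_{Z+\widetilde{Y}}$ with $\sO_Y$ and kills $R^1\pi_\ast\sO_{Z+\widetilde{Y}}$. Because $\pi|_{\widetilde{Y}}$ is finite and agrees with the normalization $\nu\colon \widetilde{Y}\to Y$, assembling the long exact sequence yields
$$0\to \sO_Y\to \nu_\ast\sO_{\widetilde{Y}}\to \cc^{Z\cdot \widetilde{Y}-1}\to 0,$$
so $\delta(Y)=Z\cdot \widetilde{Y}-1$. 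Intersecting $(Z+\widetilde{Y})\cdot E_i=\pi^{-1}(Y)\cdot E_i=0$ with $Z$ gives $Z\cdot \widetilde{Y}=-Z^2$, and Artin's formula $-Z^2=\mathrm{mult}(X)=n$ (coming from embedding dimension $n+1=-Z^2+1$) closes the calculation: $\delta(Y)=n-1$.

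I expect the main obstacle to be the cohomological bookkeeping. Rationality must be invoked twice (first to kill $R^1\pi_\ast\sO_{\widetilde{X}}$, then to kill $R^1\pi_\ast\sO_Z$), and one has to recognize that the triviality of $\sO_{\widetilde{X}}(-Z-\widetilde{Y})$ as an abstract sheaf is precisely what makes $\pi_\ast\sO_{Z+\widetilde{Y}}=\sO_Y$ on the nose rather than just up to a correction term. Dropping rationality would introduce a $p_a(Z)$ contribution into $\delta(Y)$, and indeed that is the pattern underlying the minimally elliptic analogue, where $p_a(Z)=1$ causes $\delta$ to be $n+1$ rather than $n-1$.
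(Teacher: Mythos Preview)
Your proposal is correct and matches the approach of \cite[\S 4.2]{BC}, which the paper cites without reproducing: one computes $\delta(Y)=-Z^2-1=n-1$ on the resolution via the decomposition $\pi^{-1}(Y)=Z+\widetilde{Y}$ and then invokes the characterization of rational partition curves in Proposition~\ref{prop_rational_partition_2}. The cohomological bookkeeping you outline (vanishing of $R^1\pi_\ast\sO_{\widetilde{X}}$ and $H^1(\sO_Z)$ from rationality, triviality of $\sO_{\widetilde{X}}(-Z-\widetilde{Y})$, and $Z\cdot\widetilde{Y}=-Z^2$) is exactly what underlies the formula $\delta=-Z^2-1$ recorded in the Corollary immediately following the theorem.
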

\begin{proof}
The proof uses the resolution  $\pi: \widetilde{X}\to X$, see \cite[\S 4.2.1]{BC} and we omit the details. 
\end{proof}

Here is a corollary from above theorem. 

\begin{cor}(\cite[Corollary 4.2.2]{BC})
\begin{enumerate}
\item If $(X,0)$ is a rational surface singularity, and $f\in\m$ is general, then $\delta=-Z^2-1=n-1$, where 
$Z\subset \widetilde{X}$ is the fundamental cycle. 
\item If $(X,0)$ is a minimally elliptic singularity, and $-Z^2\ge 2$ and $f$ is general, then $\delta=-Z^2=n$. 
\end{enumerate}
\end{cor}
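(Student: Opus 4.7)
The strategy is to compute $\delta(Y)$ and $-Z^2$ separately by invoking the curve-singularity classifications of Section \ref{sec_rational_elliptic_curves}, and then to bridge between them through the multiplicity of $X$.

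For part (1), I would invoke Theorem \ref{thm_hypersurface_rational} to write $(Y,0)$ as a rational partition curve $Y(n_1,\dots,n_r)$ with $n_1+\cdots+n_r=n$, where $n$ is the embedding dimension of $Y$ (one less than that of $X$). Proposition \ref{prop_rational_partition} then supplies $\delta(Y)=n-1$ at once. To identify $n$ with $-Z^2$, I would chain three facts: a general hyperplane section preserves multiplicity, so $\mathrm{mult}(X,0)=\mathrm{mult}(Y,0)$; the rational partition curve $Y(n_1,\dots,n_r)$ has multiplicity $\sum n_i=n$, visible from its monomial parametrization in Definition \ref{defn_rational_partition_1} together with additivity of multiplicity under wedges; and Artin's theorem on rational surface singularities identifies $\mathrm{mult}(X,0)$ with $-Z^2$. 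Chaining these equalities gives $n=-Z^2$, hence $\delta=-Z^2-1$.

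Part (2) follows the same template with elliptic partition curves replacing rational ones. The minimally elliptic analog of Theorem \ref{thm_hypersurface_rational}, proved in \cite{BC2}, identifies the general hyperplane section $(Y,0)$ as an elliptic partition curve $Y_{\vec p}$. Let $m$ denote the embedding dimension of $Y$. Proposition \ref{prop_elliptic_partition} gives $\delta(Y)=m+1$, while the construction in \S \ref{subsec_elliptic_partition} (a union of the $Y(p_i)$ in general position) shows $\mathrm{mult}(Y_{\vec p})=\sum p_i = m+1$ as well. For minimally elliptic singularities with $-Z^2\ge 2$, Laufer's multiplicity formula gives $\mathrm{mult}(X,0)=-Z^2$. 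Equating multiplicities across the hyperplane section yields $m+1=-Z^2$, and hence $\delta(Y)=-Z^2$, the quantity called $n$ in the statement.

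The main obstacle is really the input Theorem \ref{thm_hypersurface_rational} and its minimally elliptic sibling from \cite{BC2}: these encode the delicate resolution-theoretic step. Starting from $\pi^{-1}(Y)=Z+\tilde Y$ and the intersection pattern $\tilde Y\cdot E_i=-Z\cdot E_i$, one has to show that the branches of $Y$ assemble into the precise combinatorial shape of a partition curve (a wedge of the $Y(n_i)$ in the rational case; a general-position union in the elliptic case). This is where the arithmetic-genus condition on $Z$ does the real work, and it is also where the elliptic flavor forces the extra $+1$ that ultimately moves $\delta$ from $-Z^2-1$ to $-Z^2$. Once the classification of $Y$ is taken as known, the corollary reduces to the multiplicity matching outlined above.
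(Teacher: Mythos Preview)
Your argument is correct and is precisely the intended one: the paper does not spell out a proof here but simply records the statement as an immediate consequence of Theorem~\ref{thm_hypersurface_rational} (and, for part (2), of Proposition~\ref{prop_hypersurface_elliptic}), together with the invariants of partition curves listed in Propositions~\ref{prop_rational_partition} and~\ref{prop_elliptic_partition} and the Artin/Laufer identification of the multiplicity with $-Z^2$. Your multiplicity-matching chain $\mathrm{mult}(X,0)=\mathrm{mult}(Y,0)=n$ (respectively $m+1$) $=-Z^2$ is exactly how one unpacks ``corollary'' in this context; the minimally elliptic analogue you cite from \cite{BC2} is already stated in the paper as Proposition~\ref{prop_hypersurface_elliptic}, so you can reference it directly.
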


This implies that for minimally elliptic singularities, we have

\begin{prop}\label{prop_hypersurface_elliptic}(\cite[Proposition 4.1.1]{BC2})
Let $(X,0)$  be a minimally elliptic surface singularity of multiplicity $n\ge 3$. The general hypersurface section $(Y,0)$ of $(X,0)$ is an elliptic partition curve with 
$n=n_1+\cdots+n_r$. 
\end{prop}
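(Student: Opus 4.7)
The plan is to mirror the proof of the rational analog Theorem \ref{thm_hypersurface_rational} and reduce the statement to the classification Proposition \ref{prop_elliptic_partition} via intersection-theoretic analysis on the minimal good resolution $\pi:\widetilde X\to X$. Because $\pi$ factors through the blow-up of $\m_X$, the pullback of a sufficiently general section $Y=f^{-1}(0)$ decomposes as $\pi^{-1}(Y)=Z+\widetilde Y$, where $\widetilde Y=\widetilde Y_1\sqcup\cdots\sqcup\widetilde Y_r$ is a disjoint union of smooth branches meeting the reduced exceptional divisor transversely at smooth points. In particular $(Y,0)$ is an isolated reduced curve singularity with exactly $r$ analytic branches.

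To apply Proposition \ref{prop_elliptic_partition} I would verify its three hypotheses. Every minimally elliptic singularity is Gorenstein by Laufer, so the hypersurface section $Y$ cut out by the non-zero divisor $f\in\m_X$ is Gorenstein as well. The assumption $\mathrm{mult}_0(X)=-Z^2=n\ge 3$ combined with the formula $\mathrm{emb.dim}(X)=\max(3,-Z^2)$ for minimally elliptic singularities gives $\mathrm{emb.dim}(X)=n$, hence $\mathrm{emb.dim}(Y)=n-1$; the corollary preceding the proposition provides $\delta(Y)=-Z^2=n=(n-1)+1$. Proposition \ref{prop_elliptic_partition} therefore yields $Y\cong Y_{\vec p}$ for some partition $\vec p=(p_1,\dots,p_r)$ of $n$.

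To pin down the partition from resolution data I would show that the branch $Y_i$ is isomorphic to $Y(p_i)$ with $p_i=\widetilde Y_i\cdot Z$, by using $\pi^{-1}(\m_X)=\sO_{\widetilde X}(-Z)$ to compute the semigroup of $\sO_{Y_i,0}\subset \nu_*\sO_{\widetilde Y_i}$ via the restriction of $\pi$, exactly as in the rational case. The identity $n=\sum_i p_i$ then follows from $0=(Z+\widetilde Y)\cdot Z=Z^2+\sum_i \widetilde Y_i\cdot Z$, which holds because $\pi^{-1}(Y)$ is numerically trivial against exceptional components.

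The main obstacle is this branch-by-branch identification, because the partition of $n$ is not forced by $\delta$ alone: the wedge construction of $Y_{\vec p}$ requires the branches to be placed in ``general position'' with respect to each other's osculating hyperplanes, and verifying that the analytic branches of $Y$ inherit precisely this general position from the transverse geometry of $\widetilde Y_i\cap E$---together with the wedge-point correction imposed by the Gorenstein condition $\delta=n=(n-1)+1$ rather than the rational value $n-1$---is the technical heart of the argument, carried out in detail in \cite[\S 4]{BC2}.
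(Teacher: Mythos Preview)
The paper does not supply its own proof of this proposition: it is stated with the citation to \cite[Proposition 4.1.1]{BC2}, prefaced only by ``This implies that for minimally elliptic singularities, we have,'' indicating that the corollary on $\delta$ just above is the essential input. Your outline is exactly the argument behind that sentence and the cited result: Laufer gives Gorensteinness of $(X,0)$, hence of the hypersurface section $(Y,0)$; the multiplicity hypothesis $n=-Z^2\ge 3$ forces $\mathrm{emb.dim}(X)=n$ and $\mathrm{emb.dim}(Y)=n-1$; the corollary gives $\delta(Y)=-Z^2=n=(n-1)+1$; and Proposition~\ref{prop_elliptic_partition} then identifies $Y$ with some $Y_{\vec p}$ for a partition of $n$. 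Your additional paragraph reading off the $p_i$ as $\widetilde Y_i\cdot Z$ via $\pi^{-1}(\m_X)=\sO_{\widetilde X}(-Z)$, and your remark that the ``general position'' of the branches is the nontrivial point handled in \cite[\S 4]{BC2}, are accurate and go slightly beyond what the paper records.
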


\subsection{Tangent dimension of rational partition curves}\label{subsec_tangent_dimension_rational}

Let $(Y,0)$ be a rational partition curve singularity of type $n=(n_1,\cdots, n_r)$.  Let 
$f: (X,0)\to \cc$ be a smoothing of $Y$. Let $\tau=\dim_{\cc}\sT_Y^1$,  and $e$ be the dimension of the smoothing components.  Then from \cite[Proposition 4.5]{BC}, 
$$\tau-e=\max(0, n(n-3))$$
which is only dependent to the embedding dimension $n$ and independent to the partition curve.  

\begin{prop}\label{prop_dimension_TY}
We have 
\begin{enumerate}
\item $\dim_{\cc}\sT_Y^1=n(n-1)-r$.
\item  $\dim_{\cc}(\sT_Y^2)=\frac{1}{2}n(n-1)(n-3)$.
\item  $\dim_{\cc}(\sT_Y^3)=\frac{1}{6}n(n-1)(n-2)(2n-5)$.
\end{enumerate}
\end{prop}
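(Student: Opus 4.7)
The plan is to combine three pieces of input: Greuel's formula for smoothing dimensions, the determinantal presentation of each monomial building block $Y(m)$, and the wedge structure $Y=Y(m_1)\vee\cdots\vee Y(m_r)$ of a rational partition curve. Part (1) is essentially a balancing argument, while parts (2) and (3) require a direct graded computation on each irreducible component followed by a Mayer-Vietoris step for the wedge.

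For part (1), the approach is to invoke Greuel's formula $e=\mu+t-1$ for the dimension of a smoothing component of a quasi-homogeneous smoothable curve. Combined with Proposition \ref{prop_rational_partition}, which gives $\mu=2n-r-1$ and $t=n-1$, this yields $e=3(n-1)-r$. It then suffices to establish $\dim_{\cc}\sT_Y^1-e=(n-1)(n-3)$; this is the content of \cite[Prop.~4.5]{BC} and can be shown either by a direct Jacobian count on the catalecticant equations defining $Y$, or by identifying $(n-1)(n-3)$ as the dimension of the non-smoothing directions in the versal base. Adding the two gives $\dim_{\cc}\sT_Y^1=n(n-1)-r$.

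For parts (2) and (3), the strategy is to compute $\sT^i_{Y(m)}$ directly on each irreducible component. Since $Y(m)$ is cut out by the $2\times 2$ minors of the catalecticant matrix of Definition \ref{defn_rational_partition_1}, the Eagon-Northcott complex furnishes a $\cc^*$-equivariant minimal free resolution of $\sO_{Y(m)}$; dualizing into $\sO_{Y(m)}$ and passing to cohomology, while tracking the quasi-homogeneous grading, produces closed graded formulas for $\dim_\cc\sT^i_{Y(m)}$. The wedge is then assembled via the short exact sequence
\[
0\to \sO_Y \to \bigoplus_{k=1}^r \sO_{Y(m_k)}\to \cc^{r-1}\to 0,
\]
whose associated long exact sequence from \S\ref{subsubsec_long_sequence} expresses $\sT^i_Y$ in terms of the $\sT^i_{Y(m_k)}$ plus controlled error terms supported at the wedge point. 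Summing and simplifying then yields the polynomial expressions in the statement.

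The main obstacle is the last step: proving that for $i\geq 2$ the wedge-point error terms cancel, so that the answer depends only on the total embedding dimension $n$ and not on the specific partition $(m_1,\ldots,m_r)$. At $i=1$ the correction is visible and produces the $-r$ term, but for higher $i$ one has to use the general position of the linear spans of the components, together with the $\cc^*$-grading inherited from quasi-homogeneity, to force the relevant connecting maps in the long exact sequence to vanish in the degrees that contribute to $\sT^i_Y$. Once that vanishing is verified, what remains is an elementary polynomial identity in $n$ obtained by summing the graded Eagon-Northcott contributions from the individual components.
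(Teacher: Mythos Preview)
Your treatment of (1) is fine and close in spirit to the paper's.

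For (2) and (3), however, there is a genuine gap in the assembly step. The long exact sequence of \S\ref{subsubsec_long_sequence} applied to $0\to\sO_Y\to\bigoplus_k\sO_{Y(m_k)}\to\cc^{r-1}\to 0$ produces in the middle term the groups $T^i(\sO_Y/\cc,\sO_{Y(m_k)})$, not $\sT^i_{Y(m_k)}=T^i(\sO_{Y(m_k)}/\cc,\sO_{Y(m_k)})$. These are computed from different cotangent complexes: the first from $\ll^\bullet_{\sO_Y/\cc}$, the second from $\ll^\bullet_{\sO_{Y(m_k)}/\cc}$. The base-change isomorphism of \S\ref{subsubsec_base_change} would identify them only if $\sO_Y\to\sO_{Y(m_k)}$ were flat, which it is not (the inclusion of a component into a wedge is never flat). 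So your Mayer--Vietoris step does not express $\sT^i_Y$ in terms of the $\sT^i_{Y(m_k)}$ in any direct way, and the assertion that the wedge-point error terms cancel for $i\ge 2$ is left without a mechanism. To repair this you would have to control the relative groups $T^i(\sO_{Y(m_k)}/\sO_Y,-)$ via the Zariski--Jacobi sequence of \S\ref{subsubsec_Zariski-Jacobi}, which is substantially more work and not obviously governed by the grading argument you sketch.

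The paper avoids all of this by going in the orthogonal direction: instead of decomposing $Y$ into its branches, it takes a further general hypersurface section of $Y$. A general $f\in\m_Y$ cuts out the Artinian fat point $Z_r=\spec\big(\cc\{x_1,\dots,x_r\}/\m^2\big)$ with $r=n-1$, which depends only on $n$ and not on the partition; this is why partition-independence is automatic. The dimensions $\dim_\cc\sT^i_{Z_r}$ for $i=0,1,2$ are computed in \cite[Proposition~6.2.1]{BC}, and then Theorem~\ref{thm_main_thm} gives
\[
\dim_\cc\sT^2_Y=\dim_\cc\sT^1_{Z_r}-e_f,\qquad
\dim_\cc\sT^3_Y=\dim_\cc\sT^2_{Z_r}-\dim_\cc\sT^2_Y,
\]
from which (2) and (3) follow by arithmetic. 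No component-by-component Eagon--Northcott computation on $Y(m)$ is needed.
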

\begin{proof}
The proof is based on the hypersurface section of the partition curve $(Y,0)$ and Theorem \ref{thm_main_thm}. Let us recall it here. 

We fix an integer $r\ge 3$, and let 
$$B_r=\cc\{x_1,\cdots, x_r\}/\m^2$$
be the Artinian local $\cc$-algebra of embedding dimension $r$ and minimal multiplicity $r+1$.  Let $Z_r$ be the corresponding zero subscheme of the Artinian local 
algebra. Then $Z_r$ is a general hypersurface section of a partition curve of embedding dimension $r+1$. 

As proved in \cite[\S 6.2.1]{BC}, choose a smoothing 
$$f: Y\to (\cc,0)$$
of the point scheme $Z_r$ there is an exact sequence
$$0\to \Der_{\sO_1}(\sO_Y, \sO_Y)\stackrel{f}{\rightarrow}\Der_{\sO_1}(\sO_Y, \sO_Y)\stackrel{\beta}{\rightarrow}\Der_{\cc}(\sO_{Z_r}, \sO_{Z_r})$$
By Wahl's conjecture in \cite{Greuel-Looijenga}, the length $\cok(\beta)$ is the dimension of the smoothing component. 
Also sine $\Der_{\sO_1}(\sO_Y, \sO_Y)=0$,  this is just $\Der_{\cc}(\sO_{Z_r}, \sO_{Z_r})$, which is the derivations of the Artinian algebra $B_r$. 
This space is the endomorphism of the maximal ideal $\m$ as a vector space which is $r^2$.  Using the cotangent complex of $B_r$ as in Remark \ref{rmk_cotangent_complex_LS}, 
in \cite[Proposition 6.2.1]{BC}, the authors calculated 
\begin{enumerate}
\item  $\dim_{\cc}\sT_{Z_r}^0=r^2$, i.e., the dimension of the smoothing component of $Z_r$.
\item  $\dim_{\cc}\sT_{Z_r}^1=\frac{1}{2}(r-1)r(r+2)$.
\item $\dim_{\cc}\sT_{Z_r}^2=\frac{1}{6}r(r+1)(2r^2-2r-3)$.
\end{enumerate}

Now note that $r+1=n$,  we apply Theorem \ref{thm_main_thm}.  The first formula is from \cite[Proposition 4.5.2]{BC}.  
For the second and the third:
\begin{align*}
\dim_{\cc}\sT_Y^2&=\dim_{\cc}(\sT^1_{Z_r})-\dim_{\cc}(\sT_Y^1)\\
&=\frac{1}{2}(r-1)r(r+2)-r^2\\
&=\frac{1}{2}r(r-2)(r+1)\\
&=\frac{1}{2}n(n-1)(n-3).
\end{align*}
and 
\begin{align*}
\dim_{\cc}\sT_Y^3&=\dim_{\cc}(\sT^2_{Z_r})-\dim_{\cc}(\sT_Y^2)\\
&=\frac{1}{6}r(r+1)(2r^2-2r-3)-\frac{1}{2}n(n-1)(n-3)\\
&=\frac{1}{6}n(n-1)(n-2)(2n-5).
\end{align*}
\end{proof}

\begin{rmk}\label{rmk_Br_dimension}
It is interesting to get the dimension $\dim_{\cc}(\sT_{Z_r}^{i})$ for all $i\ge 2$.   In \cite[\S 7]{Avramov},  the dimension of the higher tangent space $\sT_{Z^r}^i$ for $i\ge 3$ is determined by the Betti numbers of the infinite free resolutions of 
$B_r$ and they do not vanish in general.  
\end{rmk}

\subsection{Tangent dimension of elliptic partition curves}\label{subsec_tangent_dimension_elliptic}

Let $Y_{\vec{p}}$ be an elliptic partition curve singularity, where $\vec{p}=(p_1, \cdots, p_r)$ is a partition of $n+1$, 
$\delta=n+1$ and the embedding dimension is $n$.  The general hypersurface section of 
$Y_{\vec{p}}$ is a Gorenstein fat point of multiplicity $r+2=n+1$ and embedding dimension $r=n-1$. 

The algebra structure $A=A_r$  of the Gorenstein fat point is given by a basis $e_0, e_1,\cdots, e_r, e_{r+1}$ where $e_0$ is the unit element, and 
the only  nontrivial products are
$$e_1^2=\cdots=e_r^2=e_{r+1}.$$
From \cite[Proposition 5.2.1]{BC2}, such an Artinian Gorenstein algebra has a pure minimal free resolution.  The generators of the $i$-th module of the syzygies are in 
degree $i+1$ for $1\leq i\leq r-1$, and in degree $r+2$ for $i=r$.  
The $i$-th module of the syzygies is minimally generated by 
$$b_i=\frac{i(r-i)}{r+1}\mat{c}r+2\\i+1\rix$$
elements for $i\leq r-1$, and $b_r=1$.
Also the syzygies tell us that there are $\frac{1}{2}(r-1)(r-2)$ quadratic equations for the singularity with $\frac{1}{3}r(r+2)(r-2)$ linear relations $(r\ge 3)$ and 
$\frac{1}{8}r(r-1)(r-3)(r+2)$ linear second syzygies $(r\ge 4)$. 

\begin{prop}\label{prop_tangent_Gorenstein_fat}(\cite[Proposition 5.3.1]{BC2})
If $A$ has embedding dimension $r\ge 3$, then 
\begin{enumerate}
\item $\dim_{\cc}(T_A^0)=\frac{1}{2}(r^2+r+2)$. 
\item  $\dim_{\cc}(T_A^1)= \frac{1}{6}r(r-1)(r+4)$, and $T_A^1$ is concentrated in degree $-1$. 
\item $\dim_{\cc}(T_A^2)=\frac{1}{12}r(r+1)(r+2)(r-3)$, and  $T_A^2$ is concentrated in degree $-2$. 
\end{enumerate}
\end{prop}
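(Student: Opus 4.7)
The plan is to compute all three modules from the explicit presentation $A = R/I$ with $R = \cc\{x_1,\ldots,x_r\}$, $x_i \mapsto e_i$, and $I = (x_i x_j : i \neq j) + (x_i^2 - x_j^2 : 1 \leq i < j \leq r)$, by exploiting the pure minimal free resolution whose Betti numbers $b_i = \tfrac{i(r-i)}{r+1}\binom{r+2}{i+1}$ are quoted before the statement. Because $A$ is graded with $\deg e_i = 1$ and $\deg e_{r+1} = 2$, each $T_A^i$ inherits a grading and is supported in only a few graded degrees; combined with the purity of the resolution, this reduces the computation to finite linear algebra over $\cc$.

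For part $(1)$ I would handle $T_A^0 = \Der_\cc(A,A)$ directly. A derivation decomposes into graded pieces of degrees $-1$, $0$, and $+1$, because $A$ is supported in degrees $\leq 2$. Imposing the Leibniz rule on the defining relations $e_i e_j = 0$ and $e_i^2 = e_j^2$ shows: the degree $-1$ piece vanishes (the equations $2 e_i D(e_i) = D(e_{r+1})$ across all $i$ force $D(e_i)=0$); the degree $0$ piece consists of matrices with a uniform diagonal entry $\lambda/2$ and arbitrary antisymmetric off-diagonal part, contributing $1 + \binom{r}{2}$ parameters; the degree $+1$ piece is free of rank $r$ with parameters $D(e_i) = \alpha_i e_{r+1}$. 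Summing gives $\tfrac{1}{2}(r^2+r+2)$.

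For parts $(2)$ and $(3)$ I would apply the Lichtenbaum--Schlessinger truncated cotangent complex of Remark \ref{rmk_cotangent_complex_LS},
\[
E_2 \xrightarrow{d_2} E_1 \otimes_R A \xrightarrow{d_1} \Omega_R \otimes_R A,
\]
where $E_1$ is free over $R$ of rank $b_1 = \tfrac{1}{2}(r-1)(r+2)$ (the minimal quadratic generators of $I$) and $E_2$ is $\ker(d_1)$ modulo Koszul relations, which by purity is a free $A$-module concentrated in a single graded degree and of rank $b_2 = \tfrac{1}{3}r(r-2)(r+2)$. Then $T_A^1$ and $T_A^2$ are the first and second cohomologies of $\Hom_A(L^{\bullet}_{\leq 2}, A)$. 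Grading analysis pins $T_A^1$ in degree $-1$ and $T_A^2$ in degree $-2$: the relations are quadratic and the second syzygies are cubic, so only narrow graded pieces of $\Hom_A(E_i \otimes A, A)$ contribute, and the stated formulas emerge from $b_1$, $b_2$ combined with $\dim_\cc A = r+2$.

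The main obstacle is the careful bookkeeping inside $\ker(d_1)$: one must separate the Koszul relations (which are absorbed by the image of $\Hom_A(\Omega_R \otimes A, A)$ under $d_1^\vee$) from the genuine second syzygies of the pure resolution, and then verify that the duals $d_1^\vee$ and $d_2^\vee$ vanish on the critical graded components so that the cohomologies coincide with full graded $\Hom$ spaces in those degrees. Once purity is leveraged this becomes routine, and the dimensions reduce to elementary binomial identities coming from the closed formula for $b_i$, reproducing Proposition~5.3.1 of \cite{BC2}.
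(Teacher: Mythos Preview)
Your approach is essentially the paper's: both compute $T_A^i$ via the truncated Lichtenbaum--Schlessinger cotangent complex of Remark~\ref{rmk_cotangent_complex_LS}, and the paper gives no further detail beyond that one-line reference to \cite{BC2}. The only slip in your outline is the parenthetical claim that the Koszul relations are ``absorbed by the image of $\Hom_A(\Omega_R\otimes A,A)$ under $d_1^\vee$''; in the LS construction one forms $E_2 = \ker(e_1)/K$ by quotienting out the Koszul submodule directly (as you in fact say a few lines earlier), not by pushing them into the image of $d_1^\vee$, but this does not affect the graded dimension count you describe.
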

\begin{proof}
The proof used cotangent complex in Remark \ref{rmk_cotangent_complex_LS} to do calculations. 
\end{proof}

\begin{prop}\label{prop_tangent_elliptic_partition}
Let $(Y,0)$ be an elliptic partition curve of multiplicity $n+1$, and with $r$ branches, then 
\begin{enumerate}
\item $\dim_{\cc}(\sT_Y^1)=\frac{1}{2}n(n+1)-r+1$. 
\item  $\dim_{\cc}(\sT_Y^2)= \frac{1}{6}n(n+1)(n-4)$, and $\sT_Y^2$ is  annihilated by the maximal ideal of the local ring.  
\item $\dim_{\cc}(\sT_Y^3)=\frac{1}{12}n(n+1)(n-3)(n-4)$. 
\end{enumerate}
\end{prop}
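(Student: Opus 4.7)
The plan mirrors the proof of Proposition \ref{prop_dimension_TY}, with the Gorenstein Artinian algebra $A$ of embedding dimension $n-1$ playing the role that the algebra $B_r$ played in the rational case. Recall from the opening of \S \ref{subsec_tangent_dimension_elliptic} that the general hypersurface section of an elliptic partition curve $(Y,0)$ of embedding dimension $n$ is exactly this Gorenstein fat point of multiplicity $n+1$, so one is in position to apply Theorem \ref{thm_main_thm}, and the groups $T_A^i$ have already been computed in Proposition \ref{prop_tangent_Gorenstein_fat}.

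For part (1), I would appeal to the elliptic analog of [BC, Proposition 4.5.2]; equivalently, one can combine Greuel's formula for the smoothing dimension $e = \mu + t - 1$ (with $\mu = 2\delta - r + 1 = 2n+3-r$ and Cohen--Macaulay type $t = 1$ by the Gorenstein property), giving $e = 2n+3-r$, with the elliptic counterpart $\tau - e = \tfrac{1}{2}(n+1)(n-4)$ of the Wahl-type identity used in \S \ref{subsec_tangent_dimension_rational}, to conclude $\dim_\cc \sT_Y^1 = \tfrac{1}{2}n(n+1) - r + 1$.

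For part (2), pick a smoothing $f\colon Y \to (\cc,0)$ of $A$ and apply Theorem \ref{thm_main_thm}(3). The dimension of the smoothing component of $A$ equals $\dim_\cc T_A^0 = \tfrac{1}{2}(n^2 - n + 2)$ by the same argument used in the proof of Proposition \ref{prop_dimension_TY}; combining this with $\dim_\cc T_A^1 = \tfrac{1}{6}(n-1)(n-2)(n+3)$ from Proposition \ref{prop_tangent_Gorenstein_fat}(2) yields
$$\dim_\cc(\sT_Y^2/f\sT_Y^2) = \tfrac{1}{6}(n-1)(n-2)(n+3) - \tfrac{1}{2}(n^2 - n + 2) = \tfrac{1}{6}n(n+1)(n-4).$$
The concentration of $T_A^1$ in degree $-1$ stated in Proposition \ref{prop_tangent_Gorenstein_fat}(2) transfers through the exact sequence of Theorem \ref{thm_main_thm}(1) to the statement $\m \cdot \sT_Y^2 = 0$; in particular $f \sT_Y^2 = 0$, and $\dim_\cc \sT_Y^2$ agrees with the number above. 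For part (3), iterate with Theorem \ref{thm_main_thm}(4):
$$\dim_\cc(\sT_Y^3/f\sT_Y^3) = \dim_\cc T_A^2 - \dim_\cc(\sT_Y^2/f\sT_Y^2) = \tfrac{1}{12}(n-1)n(n+1)(n-4) - \tfrac{1}{6}n(n+1)(n-4),$$
which simplifies to $\tfrac{1}{12}n(n+1)(n-3)(n-4)$; the same grading argument forces $f\sT_Y^3 = 0$.

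The main obstacle is the $\m$-annihilation claim $f\sT_Y^i = 0$ for $i = 2, 3$, which is what converts the ``modulo $f$'' output of Theorem \ref{thm_main_thm} into the honest dimensions asserted in the proposition. This has to be extracted by propagating the natural grading on the cotangent complex of the graded Gorenstein Artinian algebra $A$ through the exact sequences of Theorem \ref{thm_main_thm}(1), mirroring the degree concentration statements in Proposition \ref{prop_tangent_Gorenstein_fat}(2)(3); the calculation is otherwise a routine arithmetic simplification.
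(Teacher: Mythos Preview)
Your proposal is correct and follows essentially the same approach as the paper: reduce to the Gorenstein fat point $A$ of embedding dimension $r=n-1$ via Theorem~\ref{thm_main_thm} and read off the answers from Proposition~\ref{prop_tangent_Gorenstein_fat}. The paper simply cites \cite[Proposition~5.4.1]{BC2} for (1) and (2) and then performs the identical subtraction $\dim_\cc T_A^2 - \dim_\cc \sT_Y^2$ for (3); you unpack the argument for (1) and (2) and are in fact more careful than the paper in flagging that the passage from $\dim_\cc(\sT_Y^i/f\sT_Y^i)$ to $\dim_\cc\sT_Y^i$ requires the $\m$-annihilation, which the paper leaves implicit for $i=3$.
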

\begin{proof}
Results (1) and (2) are from 
\cite[Proposition 5.4.1]{BC2}.  For (3), we use Theorem \ref{thm_main_thm} to calculate:
\begin{align*}
\dim_{\cc}\sT_Y^3&=\dim_{\cc}(\sT^2_{A})-\dim_{\cc}(\sT_Y^2)\\
&=\frac{1}{12}r(r+1)(r+2)(r-3)-\frac{1}{6}n(n+1)(n-4)\\
&=\frac{1}{12}n(n+1)(n-3)(n-4).
\end{align*}
\end{proof}

\subsection{Rational and elliptic surface singularities}\label{subsec_rational_elliptic_surface}

In this section we prove the main results for rational surface singularities and minimally elliptic surface singularities. 

Our main results are:

\begin{thm}\label{thm_rational_surface}
Let $(X,0)$ be a rational surface singularity of embedding dimension $n+1\ge 4$, and 
$f$ is a general hypersurface section. 
Let $\pi: \widetilde{X}\to X$ be the minimal resolution.  Let 
$E=\cup E_i$ be the exceptional divisor and $Z$ the fundamental cycle. Then 
\begin{enumerate}
\item $\dim_{\cc}(\sT_X^2/\m \sT_X^2)=(n-1)(n-3)$, where $\m\subset \sT_X^2$ is the maximal ideal of 
$\sT_X^2$ taken as a module over the local ring $\sO_{X,0}$. 
\item If the fundamental cycle $Z$ is reduced and if for any connected subgraph $E^\prime\subset E$ with $Z\cdot E^\prime=0$ the self-intersection number
$E^\prime\cdot E^\prime=-2$ or $-3$, then $\dim_{\cc}(\sT_X^2)=(n-1)(n-3)$.
\item If $X$ is a quotient singularity then $\dim_{\cc}(\sT_X^2)=(n-1)(n-3)$.
\item If $X$ is a quotient singularity then $\dim_{\cc}(\sT_X^3)=\frac{1}{2}(n-1)(n-2)(n-3)$.
\item If $X$ is a quotient singularity then $\dim_{\cc}(\sT_X^4)=\frac{1}{6}(n-1)(n-2)(2n^2-8n+9)$.
\end{enumerate}
\end{thm}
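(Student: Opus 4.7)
The plan is to apply Theorem \ref{thm_main_thm} to the general hypersurface section $(Y,0)$ cut out by $f\in\m$. By Theorem \ref{thm_hypersurface_rational}, $Y=Y(n_1,\ldots,n_r)$ is a rational partition curve of embedding dimension $n$, where $n=n_1+\cdots+n_r$. Proposition \ref{prop_dimension_TY} supplies the dimensions of $\sT_Y^1$, $\sT_Y^2$ and $\sT_Y^3$, while Proposition \ref{prop_rational_partition} supplies the smoothing-component dimension $3(n-1)-r$. Combining these with the Main Lemma gives all five statements, modulo the $\m$-annihilation issues described below.

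For part (1), since every smoothing component of the rational partition curve $Y$ has dimension $3(n-1)-r$, the one containing the smoothing induced by $f\colon X\to(\cc,0)$ does as well, so $e_f=3(n-1)-r$. Theorem \ref{thm_main_thm}(3) and Proposition \ref{prop_dimension_TY}(1) give $\dim_\cc(\sT_X^2/f\sT_X^2)=\dim_\cc\sT_Y^1-e_f=\bigl(n(n-1)-r\bigr)-\bigl(3(n-1)-r\bigr)=(n-1)(n-3)$. The identification with $\dim_\cc(\sT_X^2/\m\sT_X^2)$ then uses the evident surjection $\sT_X^2/f\sT_X^2\twoheadrightarrow\sT_X^2/\m\sT_X^2$ together with the Behnke-Christophersen genericity argument: for $f$ projecting to a general element of $\m/\m^2$ and $\sT_X^2$ finitely generated over $\sO_{X,0}$, the two quotients have the same $\cc$-dimension.

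For (2) and (3), the work reduces to showing $\m\sT_X^2=0$, since then (1) already gives $\dim_\cc\sT_X^2=(n-1)(n-3)$. For (2) the combinatorial hypothesis on $Z$ being reduced and on the self-intersection numbers of the vanishing subgraphs is exactly the condition under which the structural results of \cite{BC} force $\m\sT_X^2=0$, and one cites that result directly. For (3) one either verifies the hypothesis of (2) on the minimal resolution of the quotient singularity at hand, or appeals to Wahl's description of deformations of quotient surface singularities to conclude that $\sT_X^2$ is a finite-dimensional $\cc$-vector space annihilated by $\m$.

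For parts (4) and (5), we iterate Theorem \ref{thm_main_thm}(4). Assuming $\sT_X^i$ is $\m$-annihilated for $i\ge 2$ in the quotient-singularity case, we have $\dim_\cc\sT_X^i=\dim_\cc(\sT_X^i/f\sT_X^i)$, and the recursion yields $\dim_\cc\sT_X^3=\dim_\cc\sT_Y^2-\dim_\cc\sT_X^2=\tfrac12 n(n-1)(n-3)-(n-1)(n-3)=\tfrac12(n-1)(n-2)(n-3)$, followed by $\dim_\cc\sT_X^4=\dim_\cc\sT_Y^3-\dim_\cc\sT_X^3=\tfrac16 n(n-1)(n-2)(2n-5)-\tfrac12(n-1)(n-2)(n-3)=\tfrac16(n-1)(n-2)(2n^2-8n+9)$ after simplification. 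The main obstacle is to establish the $\m$-annihilation of the higher $\sT_X^i$ for quotient singularities: this does not follow from the Main Lemma itself but must be extracted from the finite-quotient structure $X=\cc^2/G$, for instance by a Galois-averaging argument identifying $\sT_X^i$ with $G$-invariants of a finitely supported module on $\cc^2$, or by the Cartan-Leray spectral sequence for the covering $\cc^2\to X$.
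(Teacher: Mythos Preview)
Your overall architecture matches the paper: parts (1)--(3) are deferred to \cite{BC} via Theorem~\ref{thm_main_thm} and Propositions~\ref{prop_rational_partition} and~\ref{prop_dimension_TY}, and parts (4)--(5) are obtained by iterating Theorem~\ref{thm_main_thm}(4), contingent on the $\m$-annihilation of $\sT_X^3$ and $\sT_X^4$. Your numerical computations for (4) and (5) are identical to those in the paper.

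The one substantive divergence is in how the $\m$-annihilation of the higher $\sT_X^i$ is justified, and here your proposal has a gap. You suggest extracting it from the quotient structure $X=\cc^2/G$ via Galois averaging or a Cartan--Leray spectral sequence for the cover $\cc^2\to X$. But this cover is ramified at the origin, so neither \'etale descent nor the usual Cartan--Leray argument applies to the cotangent complex; and if one could naively identify $\sT_X^i$ with $G$-invariants of $\sT_{\cc^2}^i$, the latter vanishes for $i\ge 1$ and you would conclude $\sT_X^i=0$, which is false. Knowing that $\sT_X^i$ is supported at the origin only gives annihilation by some \emph{power} of $\m$, not by $\m$ itself.

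The paper handles this differently, and the mechanism is worth knowing. A rational surface singularity of embedding dimension $n+1$ has minimal multiplicity, hence is determinantal: it is defined by the $2\times 2$ minors of a $2\times n$ matrix with entries in $\m$. The paper then proves Lemma~\ref{lem_generalized_BC}, a direct generalization of \cite[Proposition~2.1.1]{BC}, which uses the Eagon--Northcott resolution of such a determinantal ring to show that the ideal generated by the matrix entries annihilates every $\sT_X^i$. Since those entries generate $\m$, this yields $\m\sT_X^i=0$ for all $i\ge 2$, which is exactly the input your recursion needs. So the missing idea in your argument is the determinantal presentation and its Eagon--Northcott complex, not the group action.
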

\begin{proof}
The formula in (1), (2), (3) are calculated in \cite[Theorem]{BC} using Theorem \ref{thm_main_thm} and the fact that 
$\m\cdot \sT2_X=0$ in (2), (3).  
Assume that the modules $\sT_X^3, \sT_X^4$ are also annihilated by the maximal ideal $\m$, then using the short exact sequences in \ref{thm_main_thm}
we have
\begin{align*}
\dim_{\cc}\sT_X^3&=\dim_{\cc}(\sT^2_{Y})-\dim_{\cc}(\sT_X^2)\\
&=\frac{1}{2}n(n-1)(n-3)-(n-1)(n-3)\\
&=\frac{1}{2}(n-1)(n-2)(n-3).
\end{align*}

\begin{align*}
\dim_{\cc}\sT_X^4&=\dim_{\cc}(\sT^3_{Y})-\dim_{\cc}(\sT_X^3)\\
&=\frac{1}{6}n(n-1)(n-2)(2n-5)-\frac{1}{2}(n-1)(n-2)(n-3)\\
&=\frac{1}{6}(n-1)(n-2)(2n^2-8n+9).
\end{align*}
\end{proof}

\begin{rmk}
From Remark \ref{rmk_Br_dimension}, the non-vanishing of higher $\sT_{Z_r}^i$ for $i\ge 3$ implies that the higher tangent spaces
$\sT_{X}^i$ also do not vanish. 
\end{rmk}

We provide here a proof that $\dim_{\cc}(\sT_X^3/\m \sT_X^3)=\dim_{\cc}(\sT_X^3)$.  We generalize the argument in \cite[Proposition 2.1.1]{BC}.

\begin{lem}\label{lem_generalized_BC}
Let $f_1,\cdots, f_n, g_1,\cdots, g_n$ be elements of the maximal ideal of the local ring $\sO_{\cc^e,0}$ of $\cc^e$ at $0$. Let $X$ be the space defined by
$$
\rk\mat{cccc}f_1&f_2&\cdots& f_n\\
g_1&g_2&\cdots&g_n\rix
\leq 1
$$
Assume that $X$ is Cohen-Macaulay of codimension $n-1$ in $\cc^e$. Then the $\sO_X$-modules 
$\sT_X^i$ is annihilated by $(f_1,\cdots, f_n, g_1, \cdots, g_n)\sO_X=I\sO_X$. 
\end{lem}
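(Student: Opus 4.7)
The plan is to generalize [BC, Proposition 2.1.1], which establishes the $i=2$ case, to arbitrary $i$. Write $R = \sO_{\cc^e,0}$, $I = (m_{ij})_{i<j}$ where $m_{ij}=f_ig_j-f_jg_i$, so $\sO_X = R/I$. The strategy is to exhibit, for each $f_k$ and each $g_k$, an explicit chain-level null-homotopy of multiplication by $f_k$ (resp.\ $g_k$) on a free resolution of $\sO_X$, then transport this to the cotangent complex and conclude by taking $\Ext^i(-,\sO_X)$.

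First I would record the Pl\"ucker-style identities
\begin{equation*}
f_k\, m_{ij} = f_i\, m_{kj} - f_j\, m_{ki}, \qquad g_k\, m_{ij} = g_i\, m_{kj} - g_j\, m_{ki},
\end{equation*}
which are immediate by expansion. At the first level of any resolution $F_\bullet \to \sO_X$ these identities supply a chain homotopy $h_k^{(1)}\colon F_1\to F_2$ expressing multiplication by $f_k$ (and $g_k$) as $dh + hd$. By the Cohen-Macaulay codimension hypothesis, the Eagon-Northcott complex provides a minimal free resolution of length $n-1$, with $F_p \cong \wedge^{p+1} R^n \otimes S^{p-1}(R^2)$ and differentials built from the two rows of the matrix via contraction. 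The higher syzygies are governed by the same alternating-sum combinatorics as the Pl\"ucker identity; I would construct $h_k^{(p)}\colon F_p \to F_{p+1}$ by inserting the index $k$ into the length-$p+1$ wedge part and correcting signs on the symmetric-power factor, verifying $d h_k^{(p)} + h_k^{(p-1)} d = f_k\cdot \id_{F_p}$ level by level.

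Next, the cotangent complex $\ll^\bullet_{X/\cc}$ of Illusie/Lichtenbaum-Schlessinger is built functorially from $F_\bullet$ (extending Remark \ref{rmk_cotangent_complex_LS} beyond length two using the full construction of \cite{Illusie1}, \cite{Illusie2}). Functoriality of this construction transports the chain-level null-homotopies $h_k^{(\bullet)}$ to $\sO_X$-linear null-homotopies on $\ll^\bullet_{X/\cc}$ after base change to $\sO_X$. Taking $\sT_X^i = H^i(\Hom_{\sO_X}(\ll^\bullet_{X/\cc}, \sO_X))$ shows that each $f_k$ and each $g_k$ acts as zero, so $I\cdot \sT_X^i = 0$, as required.

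The main obstacle will be the last step: ensuring that the induced map on the \emph{cotangent complex} itself (not just on $F_\bullet$) is still null-homotopic in high homological degrees. The $i=2$ argument of [BC] only needs the presentation $F_2\to F_1\to R$ together with the piece of $\ll^\bullet$ captured by Remark \ref{rmk_cotangent_complex_LS}; for larger $i$ one must track the K\"ahler-differential and additional-generator corrections in Illusie's construction and verify that the homotopies descend compatibly. The alternating-sum, matrix-contraction structure of the Eagon-Northcott differential is what makes the propagation essentially mechanical, but the bookkeeping at each level is the delicate point.
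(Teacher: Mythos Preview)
Your approach shares the essential algebraic input with the paper's proof: both use the Eagon--Northcott complex as the free resolution (available by the Cohen--Macaulay codimension hypothesis) and both exploit the Pl\"ucker-type identities $f_k m_{ij} = f_i m_{kj} - f_j m_{ki}$ to produce liftings. The difference is in packaging. The paper works degree by degree: for each fixed $i$ it writes $\sT_X^i$ as a cokernel $\Hom_{\sO_X}(\Omega_{E_{i-1}}\otimes\sO_X,\sO_X)\to \Hom_{\sO_X}(\Omega_R\otimes\sO_X,\sO_X)$ coming from a truncation of the Eagon--Northcott resolution, picks an arbitrary $\phi$ in the target, and exhibits by hand the elements $h_{\cdots}$ witnessing that $f_1\cdot\phi$ lies in the image (the case $i=3$ is written out explicitly, using the $\rk(E_4)$ relations among the generators $X_{i,j,k,l},Y_{i,j,k,l},Z_{i,j,k,l}$ of $\ker(e_3)$). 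You instead aim to build a single global null-homotopy $h_k^{(\bullet)}$ of $f_k\cdot\id$ on $F_\bullet$ and then invoke functoriality of the cotangent-complex construction to transport it.

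Your version is conceptually cleaner and, if it goes through, handles all $i$ uniformly. But the obstacle you flag at the end is exactly the point where the two arguments diverge, and it is not merely bookkeeping: a null-homotopy on the free $R$-resolution $F_\bullet$ does not automatically descend to a null-homotopy on $\ll^\bullet_{X/\cc}$, because the cotangent complex is built from a simplicial (or DG) \emph{algebra} resolution, not from $F_\bullet$ itself, and the passage involves K\"ahler differentials of the resolving algebra rather than the modules $F_p$. The paper sidesteps this entirely by never leaving the explicit cokernel description in each degree, so no transport-of-homotopy argument is needed. If you want to keep your uniform formulation, you would need either to work directly with a Tate/DG-algebra model built on the Eagon--Northcott complex and check that your contraction operators $h_k^{(p)}$ are compatible with the algebra differentials, or else fall back to the paper's degree-by-degree lifting, which is what your $h_k^{(p)}$ amount to anyway once unwound.
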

\begin{proof}
The case of $\sT_X^2$ is proved in \cite[Proposition 2.1.1]{BC}. Let us recall it here. 
First let 
$A=\cc[x_1, \cdots, x_e]$ and $F, G$ be free $A$-algebras of rank $n$ and $2$ respectively.  The Eagon-Northcott complex of $\sO_X$ (see, \cite{Eisenbud}) which is a resolution of $A/I$,  is given as follows. 
Let $\alpha: F\to G$ be the homomorphism given by the matrix
$$
\mat{cccc}f_1&f_2&\cdots& f_n\\
g_1&g_2&\cdots&g_n\rix.
$$
Then the Eagon-Northcott complex is:
\begin{equation}\label{eqn_Eagon-Northcott}
0\to (S_{n-2}G)^*\otimes \Lambda^n F\stackrel{e_{n-1}}{\rightarrow} (S_{n-3}G)^*\otimes \Lambda^{n-1} F\stackrel{e_{n-2}}{\rightarrow}\cdots \rightarrow 
(S_{1}G)^*\otimes \Lambda^3 F\stackrel{e_{2}}{\rightarrow}\Lambda^2 F\stackrel{e_{1}}{\rightarrow} A\to A/I\to 0
\end{equation}
Here we let $E_i:=(S_{i-1}G)^*\otimes \Lambda^{i+1}F$
for $i=1, 2,\cdots, n-1$.  $E_i$ has rank 
$$\rk(E_i)=\mat{c}i\\ i-1\rix\cdot \mat{c}n\\i+1\rix.$$
The cotangent complex is given by:
\begin{equation}\label{eqn_cotangent_complex_Lem}
0\to \Omega_{E_{n-1}}\otimes_{A}A/I \stackrel{d_{n-1}}{\rightarrow} \Omega_{E_{n-2}}\otimes_{A}A/I \stackrel{d_{n-2}}{\rightarrow}\cdots \rightarrow 
\Omega_{E_{2}}\otimes_{A}A/I \stackrel{d_{2}}{\rightarrow}\Omega_{E_{1}}\otimes_{A}A/I \stackrel{d_{1}}{\rightarrow} \Omega_{A}\otimes_{A}A/I \to 0
\end{equation}
In the case of calculation of $\sT_X^2$, we can consider the truncation of (\ref{eqn_Eagon-Northcott}).
Let $\Im(e_3)=\ker(e_2)=R$ and we have:
$$0\to R\stackrel{e_{2}}{\rightarrow} E_1\stackrel{e_{1}}{\rightarrow}A\to A/I\to 0$$
From Remark \ref{rmk_cotangent_complex_LS}, 
$$\sT_X^2=\cok\left(\Hom_{\sO_X}(E_1\otimes\sO_X, \sO_X)\rightarrow \Hom_{\sO_X}(R, \sO_X)\right).$$

Let $F_{i,j}:=f_ig_j-f_jg_i$ for $1\leq i<j\leq n$. 
The module $R$ is the relations of $F_{i,j}$'s  which is generated by:
$$
\begin{cases}
R_{i,j,k}=f_i F_{j,k}-f_j F_{i,k}+f_k F_{i,j};\\
S_{i,j,k}=g_i F_{j,k}-g_j F_{i,k}+g_k F_{i,j}
\end{cases}
$$
for $1\leq i<j<k\leq n$. 
It is sufficient to show $f_1\sT_X^2=0$ from \cite[Proposition 2.1.1]{BC}, because changing rows and columns does not change $X$. 
Let us choose $\phi\in \Hom_{\sO_X}(R, \sO_X)$, and let 
$$\phi(R_{i,j,k})=\phi^1_{i,j,k}; \quad   \phi(S_{i,j,k})=\phi^2_{i,j,k}.$$
It is enough to show that $f_1\phi\in \Im(\Hom_{\sO_X}(E_1\otimes\sO_X, \sO_X))$. i.e., show that there exist $\tiny\mat{c} n\\ 2\rix$ elements
$h_{i,j}$ of $\sO_X. (1\leq i<j\leq n)$ such that 
 $$
\begin{cases}
f_1\cdot \phi^1_{i,j,k}=f_i h_{j,k}-f_j h_{i,k}+f_k h_{i,j};\\
f_1\cdot \phi^2_{i,j,k}=g_i h_{j,k}-g_j h_{i,k}+g_k h_{i,j}.
\end{cases}
$$
The existence of such $h_{i,j}$'s are given by the relations among relations of $R_{i,j,k}$ and $S_{i,j,k}$ and there are totally 
$\rk(E_3)=3\cdot \tiny\mat{c}n\\4\rix$ of them. 

Now for the module $\sT_X^3$ for $i\ge 3$, we use similar analysis.  Let us do the case 
$i=3$.  In this case let 
$R=\Im(e_4)=\ker(e_3)$ in (\ref{eqn_Eagon-Northcott}).  We get:
$$0\to R\stackrel{e_{3}}{\rightarrow} E_2 \stackrel{e_{2}}{\rightarrow}E_1\stackrel{e_{1}}{\rightarrow}A\to A/I\to 0$$
and 
$$\sT_X^3=\cok\left(\Hom_{\sO_X}(\Omega_{E_2}\otimes\sO_X, \sO_X)\rightarrow \Hom_{\sO_X}(\Omega_{R}\otimes\sO_X, \sO_X)\right).$$
The module $R=\Im(e_4)=\ker(e_3)$ is generated by 
$$X_{i, j, k, l}, \quad Y_{i, j, k, l}, \quad  Z_{i, j, k, l}$$
for $1\leq i<j<k<l\leq n$.  We show that 
$f_1\phi\in \Im(\Hom_{\sO_X}(E_2\otimes\sO_X, \sO_X))$ for any $\phi\in \Hom_{\sO_X}(R\otimes\sO_X, \sO_X))$.
This needs to show there exist $2\tiny\mat{c}n\\3\rix$ elements $h_{i,j,k}^1,  h_{i,j,k}^2$ such that 
$$
\begin{array}{l}
\phi(X_{i,j,k,l})=f_ih^1_{j,k,l}-f_jh^1_{i,k,l}+f_kh^1_{i,j,l}-f_lh^1_{i,j,k},\\
\phi(Y_{i,j,k,l})=g_ih^1_{j,k,l}-g_jh^1_{i,k,l}+g_kh^1_{i,j,l}-g_lh^1_{i,j,k},\\
\phi(Z_{i,j,k,l})=f_ih^2_{j,k,l}-f_jh^2_{i,k,l}+f_kh^2_{i,j,l}-f_lh^2_{i,j,k}.
\end{array}
$$
These  relations are given by the $\rk(E_4)=4\tiny\mat{c}n\\5\rix$ relations of 
$X_{i, j, k, l}, Y_{i, j, k, l},   Z_{i, j, k, l}$.  For instance, when $n=4$, $f_1\cdot \phi=0$ automatically since there is zero relations of $X_{i, j, k, l}, Y_{i, j, k, l},   Z_{i, j, k, l}$. 
\end{proof}

For the case of minimally elliptic surface singularities, we have
\begin{thm}\label{thm_ellitptic_surface_singularities}
Let $(X,0)$ be a minimally elliptic  surface singularity of embedding dimension $n+1\ge 5$, and 
$f$ is a general hypersurface section.  Then 
\begin{enumerate}
\item $\dim_{\cc}(\sT_X^2/\m\sT^2_X)=\frac{1}{2}(n+1)(n-4)$, where $\m$ is the maximal ideal and 
$\m\sT_X^2=f\sT_X^2$.
\item $\dim_{\cc}(\sT_X^3/\m\sT^3_X)=\frac{1}{6}(n+1)(n-3)(n-4)$.
\item $\dim_{\cc}(\sT_X^4/\m\sT^4_X)=\frac{1}{12}(n+1)(n-4)(n-3)(n-2)$.
\end{enumerate}
\end{thm}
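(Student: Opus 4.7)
The plan is to combine three ingredients already in place: Proposition \ref{prop_hypersurface_elliptic} identifies a general hypersurface section $(Y,0)$ of $(X,0)$ as an elliptic partition curve $Y_{\vec p}$ of embedding dimension $n$ and $r$ branches; Proposition \ref{prop_tangent_elliptic_partition} records the dimensions of $\sT_Y^1,\sT_Y^2,\sT_Y^3$ in terms of $n$ and $r$; and Theorem \ref{thm_main_thm} furnishes exact sequences relating $\sT_X^i/f\sT_X^i$ to $\sT_Y^i$ via the identification $\sT_X^i/f\sT_X^i\cong\Im(\alpha_i)\subseteq\sT_Y^i$. The statement splits naturally into two parts: the $f$-quotient dimensions, and the upgrade to $\m$-quotients.

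For the base case $i=2$, Theorem \ref{thm_main_thm}(3) gives $\dim_{\cc}(\sT_X^2/f\sT_X^2)=\dim_{\cc}(\sT_Y^1)-e_f$. Since $Y_{\vec p}$ is Gorenstein with $\delta=n+1$, its Milnor number is $\mu=2\delta-r+1=2n+3-r$ and its Cohen--Macaulay type is $t=1$, so the smoothing component dimension is $e_f=\mu+t-1=2n+3-r$ by the Greuel formula recalled at the start of \S\ref{subsec_tangent_dimension_rational} (or directly from the Gorenstein smoothing theory of \cite{BC2}). Combining with $\dim_{\cc}(\sT_Y^1)=\tfrac{1}{2}n(n+1)-r+1$ produces $\tfrac{1}{2}(n+1)(n-4)$. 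The higher quotient dimensions follow by telescoping Theorem \ref{thm_main_thm}(4) against the values in Proposition \ref{prop_tangent_elliptic_partition}:
\begin{align*}
\dim_{\cc}(\sT_X^3/f\sT_X^3)&=\dim_{\cc}(\sT_Y^2)-\dim_{\cc}(\sT_X^2/f\sT_X^2)=\tfrac{1}{6}(n+1)(n-3)(n-4),\\
\dim_{\cc}(\sT_X^4/f\sT_X^4)&=\dim_{\cc}(\sT_Y^3)-\dim_{\cc}(\sT_X^3/f\sT_X^3)=\tfrac{1}{12}(n+1)(n-2)(n-3)(n-4).
\end{align*}

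The second task is to identify these $f$-quotients with the $\m$-quotients, i.e., establish $\m\sT_X^i=f\sT_X^i$. The inclusion $f\sT_X^i\subseteq\m\sT_X^i$ is trivial; for the reverse, the embedding $\sT_X^i/f\sT_X^i\hookrightarrow\sT_Y^i$ reduces the problem to showing $\m_Y\sT_Y^i=0$, since $\m_Y$ is the image of $\m$ modulo $f$ and thus annihilation in $\sT_Y^i$ forces $\m\cdot(\sT_X^i/f\sT_X^i)=0$. For $i=2$ this annihilation is precisely Proposition \ref{prop_tangent_elliptic_partition}(2). For $i=3,4$ I would prove $\m_Y\sT_Y^i=0$ by the analog of Lemma \ref{lem_generalized_BC}, working with the pure minimal free resolution of the Gorenstein Artin algebra $A$ of Proposition \ref{prop_tangent_Gorenstein_fat} (syzygy ranks $b_i=\tfrac{i(r-i)}{r+1}\binom{r+2}{i+1}$): at each level one picks $\phi$ in the relevant Hom module of the cotangent complex of $A$ and, for every generator $g$ of the maximal ideal, exhibits a lift of $g\phi$ to the preceding term using the relations-among-relations encoded in the purity of the resolution, and then propagates the annihilation up to $\sT_Y^i$ via a second application of Theorem \ref{thm_main_thm} to the hypersurface section $Y\to A$.

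The main obstacle is precisely this annihilation step for $i=3$ and $i=4$: Lemma \ref{lem_generalized_BC} exploits the very clean Eagon--Northcott structure with its uniform binomial syzygy counts, whereas the Gorenstein pure resolution of $A_r$ has less uniform syzygy modules, so the combinatorial bookkeeping required to verify that $g\phi$ lies in the image of $\Hom_{\sO_X}(\Omega_{E_{i+1}}\otimes\sO_X,\sO_X)$ is significantly heavier. The independent computation of $e_f$ is a secondary technical point handled directly by the Gorenstein condition. Once the annihilation results are secured, the dimension recursion above delivers formulas (1)--(3) without further work.
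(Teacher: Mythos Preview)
Your approach is the same as the paper's: part (1) is taken directly from \cite[Theorem 6.1.1]{BC2} (the paper simply cites this rather than recomputing $e_f$), and parts (2) and (3) follow by applying the recursion of Theorem \ref{thm_main_thm}(4) against the values of $\dim_{\cc}\sT_Y^i$ from Proposition \ref{prop_tangent_elliptic_partition}, exactly as you do. You are in fact more scrupulous than the paper about the distinction between $f$-quotients and $\m$-quotients: the paper's proof writes $\m$-quotients throughout the recursion without separately arguing that $\m\sT_X^i=f\sT_X^i$ for $i=3,4$, so the annihilation step you flag as the main obstacle is simply left unaddressed there.
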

\begin{proof}
The formula (1) is from \cite[Theorem 6.1.1]{BC2}. We calculate  (2) and (3) using 
Theorem \ref{thm_main_thm}:
\begin{align*}
\dim_{\cc}(\sT_X^3/m\sT_X^3)&=\dim_{\cc}(\sT^2_{Y})-\dim_{\cc}(\sT_X^2/\m\sT^2_X)\\
&=\frac{1}{6}n(n+1)(n-4)-\frac{1}{2}(n+1)(n-4)\\
&=\frac{1}{6}(n+1)(n-4)(n-3).
\end{align*}

\begin{align*}
\dim_{\cc}(\sT_X^4/\m\sT_X^4)&=\dim_{\cc}(\sT^3_{Y})-\dim_{\cc}(\sT_X^3/\m\sT_X^3)\\
&=\frac{1}{12}n(n+1)(n-4)(n-3)-\frac{1}{6}(n+1)(n-4)(n-3)\\
&=\frac{1}{12}(n+1)(n-4)(n-3)(n-2).
\end{align*}
\end{proof}

\begin{rmk}
The result in Theorem \ref{thm_ellitptic_surface_singularities}  gives the minimal generators of the modules 
$\sT_X^2, \sT_X^3, \sT_X^4$. 
\end{rmk}

\subsection{Application to semi-log-canonical surfaces}\label{subsec_slc}

Let $X$ be a projective surface over $\cc$.  We say $X$ has semi-log-canonical (s.l.c.) singularities if the following conditions hold.
\begin{enumerate}
\item The surface $X$ is reduced, Cohen-Macaulay, and has only double normal crossing singularities $(xy=0)\subset\cc^3$ away from a finite set of points. 
\item Let $(X^\nu, \Delta^\nu)$ be the normalization of $X$ with the inverse image of the double curve.  Then $(X^{\nu}, \Delta^{\nu})$ has log canonical singularities. 
\item For some $N>0$ the $N$-th reflexive tensor power $\omega_X^{[N]}=(\omega_X^{\otimes N})^{\vee}$ for the dualizing sheaf $\omega_X$ is invertible. 
\end{enumerate}

From \cite[Theorem 4.23, Theorem 4.24]{Kollar-Shepherd-Barron},  s.l.c. surface singularities have the following possibilities: 

\begin{enumerate}
\item  The semi-log-terminal (s.l.t.) surface singularities; 
\item The Gorenstein s.l.c. surfaces;  
\item  the $\zz_2, \zz_3, \zz_4, \zz_6$ quotients of the simple elliptic singularity, and $\zz_2$ quotients of cusps, and  degenerate cusps.
\end{enumerate}
The s.l.t. surface singularities are classified in \cite[Theorem 4.23]{Kollar-Shepherd-Barron}, which are the cyclic quotient singularities in the sense of Brieskorn, or the cyclic quotients of normal crossings or pinch points.
From  \cite[Theorem 4.21 (ii)]{Kollar-Shepherd-Barron}, Gorenstein s.l.c. surface singularities are either semi-canonical singularities or simple  elliptic singularities,  cusp, and degenerate cusp singularities. Here   
semi-canonical surface singularities are either smooth points, normal crossing points, pinch points or DuVal singularities.  

Recall that a stable surface is a s.l.c. surface $S$ such that $\omega_S$ is ample. 
The moduli space of general type stable surfaces has a compactification by adding s.l.c. surfaces in the boundary, see \cite{Kollar-Book}.

Cyclic quotient singularities are rational surface singularities, and cusp singularities are minimally elliptic surface singularities. 
Let $(X,0)$ be a projective general type surface with only $0\in X$ an isolated cyclic quotient singularity. Then $X$ is s.l.c. 
Recall the local to global spectral sequence
$$E_2^{p,q}=H^p(X, \sT_X^q)\Rightarrow T^{p+q}_{X}.$$
The global cohomology $H^i(X, F)=0$ for $i\ge 3$ and any coherent sheaf $F$. 
Since $X$ only has one isolated singularity $p\in X$, and the tangent sheaf $\sT_X^q$ only supports on the singular point 
$0\in X$ for $q\geq 2$,  from the spectral sequence 
$$T^i_{X}=\sT^i_{X}$$
for $i\ge 3$.
Thus we get:
\begin{prop}\label{prop_cyclic_slc}
Let $X$ be a projective s.l.c. general type surface with only finite cyclic quotient singularities.  If there is one singularity germ such that the local embedding dimension $n\ge 4$, then 
the higher obstruction  spaces $T^i_{X}$ do not vanish. 
\end{prop}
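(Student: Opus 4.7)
The strategy is to reduce the global obstruction $T^i_X$ to a direct sum of local obstructions $\sT^i_{X,p}$ via the local-to-global spectral sequence, and then invoke Theorem \ref{thm_rational_surface} to produce an explicit nonzero summand.

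First I would unpack the local-to-global spectral sequence
\[
E_2^{p,q} = H^p(X, \sT_X^q) \;\Longrightarrow\; T^{p+q}_X.
\]
Since $X$ is a projective surface, $H^p(X, F) = 0$ for every coherent sheaf $F$ and every $p \ge 3$. For $q \ge 1$ the sheaf $\sT_X^q$ is supported on the singular locus, because on the smooth part of $X$ the cotangent complex $\ll^{\bullet}_X$ is quasi-isomorphic to $\Omega^1_X$ placed in degree zero and hence its higher $\Ext$'s with $\sO_X$ vanish. Because the singularities are finite in number and isolated (cyclic quotients), $\sT_X^q$ is a skyscraper sheaf for $q \ge 1$ with vanishing higher cohomology, so the only surviving $E_2^{p,q}$-term in total degree $i \ge 3$ is the one with $p = 0$. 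This gives the identification
\[
T^i_X \;\cong\; H^0(X, \sT_X^i) \;=\; \bigoplus_{p \in X_{\mathrm{sing}}} \sT^i_{X,p}
\qquad (i \ge 3).
\]

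Next I would localize at the distinguished singular point $p_0 \in X$ and write its local embedding dimension as $n+1$ in the notation of Theorem \ref{thm_rational_surface}; the hypothesis of the proposition then becomes $n \ge 3$. Part (5) of that theorem yields
$\dim_{\cc} \sT^4_{X,p_0} = \tfrac{1}{6}(n-1)(n-2)(2n^2 - 8n + 9)$,
which is strictly positive already at $n = 3$ (where it equals $1$). Hence $\sT^4_{X,p_0} \ne 0$, and this single summand already forces $T^4_X \ne 0$.

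The heavy lifting has already been performed in Theorem \ref{thm_rational_surface}, so there is no genuinely hard step remaining. The only mild subtlety is verifying that $\sT^q_X$ is supported only on the singular locus for $q \ge 1$, a standard consequence of the vanishing of the cotangent complex beyond degree zero on smooth schemes. Once that is in place, the spectral sequence is essentially degenerate in the range of interest, and the nonvanishing is immediate from the local formula.
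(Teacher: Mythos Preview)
Your argument is correct and follows the same route as the paper: reduce via the local-to-global spectral sequence, use that $\sT_X^q$ is a skyscraper for $q\ge 1$ so that $T_X^i\cong H^0(X,\sT_X^i)=\bigoplus_p \sT_{X,p}^i$ for $i\ge 3$, and then invoke Theorem~\ref{thm_rational_surface}. Your choice to work with $i=4$ rather than $i=3$ is a good observation, since at the threshold embedding dimension (the theorem's $n=3$) one has $\dim_\cc\sT_X^3=\tfrac12(n-1)(n-2)(n-3)=0$ while $\dim_\cc\sT_X^4=1$; the paper's proof is less explicit on this point.
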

\begin{proof}
This is from the above analysis using local to global spectral sequence and the results in Theorem \ref{thm_rational_surface}. 
\end{proof}

Now let $(X,0)$ be a s.l.c. surface with one isolated singularity $0\in X$.  We assume that the singularity $0$ is  one of the following germ singularities:  a simple elliptic singularity, a $\zz_2, \zz_3, \zz_4, \zz_6$-quotient of simple elliptic singularity;   a cusp, and the 
$\zz_2$ quotient of a cusp singularity. 
Then the index of $p\in X$ is the least integer $N$ such that $\omega_X^{[N]}$ is invertible.  We fix an isomorphism 
$\theta: \omega_X^{[N]}\to \sO_X$ and define 
$$Z:=\spec_{\sO_X}(\sO_X\oplus\omega_X\oplus\cdots\oplus\omega_X^{[N-1]})$$
where the multiplication on $\sO_Z$ is defined by the isomorphism $\theta$. 
Then $\pi: Z\to X$ is a cyclic cover of degree $N$ such that  $\pi^{-1}(0)=0$ is a single point in $Z$. This cover  is called the index one cover of $X$.
The surface $Z$ is Gorenstein and $(Z,0)$ is also s.l.c.  
The index one covering Deligne-Mumford stack is $\XX=[Z/\mu_N]$, and \cite{Hacking} proved that the $\qq$-Gorenstein deformation of 
$X$ is equivalent to the deformation of the index one covering Deligne-Mumford stack $\XX=[Z/\mu_N]$. 
Let $\pi: \XX\to X$ be the map to its coarse moduli space. 

Let $\ll^{\bullet}_{\XX}$ be the cotangent complex of $\XX$.  Define
$$\sT_{\QG}^i(X)=\pi_*\sE xt^i(\ll^{\bullet}_{\XX}, \sO_{\XX})$$
and 
$$T_{\QG}^i(X)=\Ext^i(\ll^{\bullet}_{\XX}, \sO_{\XX}).$$
The first two groups $T_{\QG}^1(X)$ and $T_{\QG}^2(X)$ classify the $\qq$-Gorenstein deformations and obstructions of the s.l.c. surface $X$.   The groups $T_{\QG}^i(X)$ for $i\ge 3$
 are higher obstruction spaces for deforming $X$. 
There is also a local to global spectral sequence
$$E_2^{p,q}=H^p(X, \sT_{\QG}^q(X))\Rightarrow T_{\QG}^{p+q}(X).$$
Since $0\in X$ is a cyclic quotient of a simple elliptic singularity or a cusp singularity,  its lift $0\in Z$ must be  a simple elliptic singularity or a cusp singularity since $Z$ is Gorenstein. 
The sheaves $\sT_{\QG}^i(X)$ supports only on the point $0$ when $i\ge 2$.  Thus from the spectral sequence
$$T_{\QG}^i(X)=\sT_{\QG}^i(X)$$
for $i\ge 3$.
Thus we get:
\begin{prop}\label{prop_elliptic_slc}
Let $X$ be a s.l.c. surface with only finite simple elliptic,  cusp  singularities or the cyclic quotients of them.  If the local embedding dimension of the singularity is bigger than $5$,  then 
the higher obstruction  spaces $T_{\QG}^i(X)$ for $i\ge 3$ do not vanish. 
\end{prop}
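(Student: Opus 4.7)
The plan is to reduce to the non-vanishing of higher tangent spaces of the Gorenstein index one cover, which is controlled by Theorem \ref{thm_ellitptic_surface_singularities}.

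First, as noted just before the statement, I would invoke the local-to-global spectral sequence
$$E_2^{p,q} = H^p(X, \sT^q_{\QG}(X)) \Rightarrow T^{p+q}_{\QG}(X).$$
For $q \ge 2$ the sheaf $\sT^q_{\QG}(X)$ is supported at the finite set of singular points, hence a skyscraper with vanishing higher cohomology; for $q = 0, 1$ coherent cohomology on the projective surface vanishes above degree $2$. Thus for $i \ge 4$ one has $T^i_{\QG}(X) = \sT^i_{\QG}(X)$ exactly, and for $i = 3$ the module $\sT^3_{\QG}(X)$ still survives as an edge quotient of $T^3_{\QG}(X)$ (the outgoing $d_2$ from $E_2^{0,3}$ lands in $E_2^{2,2} = 0$, and higher differentials land in $0$ for cohomological-dimension reasons). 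So it suffices to prove $\sT^i_{\QG}(X) \neq 0$ for $i = 3, 4$.

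The second step passes to the index one cover. Writing $\XX = [Z/\mu_N]$, one has the identification of stalks $\sT^i_{\QG}(X)_0 = \bigl(\sT^i_{Z, 0}\bigr)^{\mu_N}$. By construction the germ $(Z,0)$ is Gorenstein semi-log-canonical and, under the hypotheses of the statement, is either a simple elliptic or a cusp singularity, hence minimally elliptic in the sense of \S \ref{subsec_elliptic_surface}. Translating the embedding-dimension hypothesis on $X$ to the cover (via the index-one-cover construction recalled in the paragraph preceding the statement) yields embedding dimension at least $6$ for $(Z,0)$, so Theorem \ref{thm_ellitptic_surface_singularities} supplies strictly positive values of $\dim_\cc(\sT^i_Z/\m\sT^i_Z)$ for $i = 3$ and $i = 4$, and in particular $\sT^i_{Z,0} \neq 0$.

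The main obstacle is the final taking of $\mu_N$-invariants: a priori a non-trivial character of $\mu_N$ could account for all of $\sT^i_{Z,0}$. For the Gorenstein cases $N = 1$ there is nothing to check. For $N \in \{2,3,4,6\}$ one must combine the explicit $\mu_N$-equivariant structure on the generators of $\sT^i_Z$ produced by the cotangent-complex computation of \cite{BC2} with the short list of admissible actions in \cite[Theorem 4.24]{Kollar-Shepherd-Barron} to verify that the trivial character appears with positive multiplicity. This equivariant case check, rather than the spectral sequence or the reduction to $(Z,0)$, is the genuinely non-formal step of the argument.
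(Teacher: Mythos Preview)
Your outline follows exactly the route the paper takes: the local-to-global spectral sequence collapses to give $T_{\QG}^i(X)=\sT_{\QG}^i(X)$ for $i\ge 3$, and the local sheaf is then controlled by the Gorenstein index-one cover $(Z,0)$, which is simple elliptic or cusp and hence minimally elliptic, so Theorem~\ref{thm_ellitptic_surface_singularities} applies. The paper's proof is literally a single sentence invoking these two ingredients; in particular, your observation that the passage from $\sT^i_{Z,0}\neq 0$ to $(\sT^i_{Z,0})^{\mu_N}\neq 0$ requires an honest equivariant check when $N>1$ is a genuine subtlety that the paper does not address at all---you are being more careful here than the paper itself.
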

\begin{proof}
This is from the above analysis  and the results in Theorem \ref{thm_ellitptic_surface_singularities}. 
\end{proof}

\subsection{Application to the perfect obstruction theory}

Let $X$ be a s.l.c. surface, and $\pi: \XX\to X$ be its index one covering Deligne-Mumford stack.   We consider the $\qq$-Gorenstein deformation of $X$ as the deformation of the Deligne-Mumford stack 
$\XX$.   Let us fix two invariants $K^2, \chi\in \zz$, and 
let $M:=\overline{M}_{K^2, \chi}$ be the  moduli functor
$$M: \Sch_{\cc}\to \text{Groupoids}$$
sending 
\begin{equation}\label{eqn_functor1}
T\mapsto  \left\{\text{the groupoid of ~} \sX\to T \left| \begin{array}{l}
  \text{$\bullet \sX\to T$ is a flat family of stable s.l.c. surfaces} \\
  \text{$\bullet$ for each fibre ~} X_t , t\in T, K_{X_t}^2=K^2, \chi(\sO_{X_t})=\chi. \\
     \end{array}  \right\}\right.
     \end{equation}
From \cite{Kollar-Shepherd-Barron}, \cite{Kollar-Book},
the functor $M$ is represented by a proper Deligne-Mumford stack $M:=\overline{M}_{K^2, \chi}$
of finite type over $\cc$ with projective coarse moduli space. 

Let $A$ be a $\cc$-algebra and  $J$ be a finite $A$-module. 
If  $\sX/A$ is a family of $\qq$-Gorenstein deformation of $X$ over $A$ and $\XX/A$ the family of the corresponding  index one  covering Deligne-Mumford  stacks, then  we define
$T_{\QG}^i(\sX/A,  J):=\Ext^i(\ll^{\bullet}_{\XX/A},  \sO_{\XX}\otimes_{A} J)$. 
From \cite{Hacking}, the set of isomorphism classes of $\qq$-Gorenstein deformations of $\sX_0/A_0$ over $A_0+J$ is canonically isomorphic to $T_{\QG}^1(\sX_0/A_0,  J)$.  Let $A^\prime\to A\to A_0$ be extensions such that the kernel of $A^\prime\to A$ is $J$, then  there exists a canonical element 
$\Ob(\sX/A, J)\in T_{\QG}^2(\sX/A,  J)$ called the obstruction class such that it  vanishes if and only if there exists a $\qq$-Gorenstein deformation $\sX^\prime/A^\prime$ over $A^\prime$. 

We define a new functor:
$$M^{\ind}:=\overline{M}^{\ind}_{K^2, \chi}: \Sch_{\cc}\to \text{Groupoids}$$
which is given by:
\begin{equation}\label{eqn_functor2}
T\mapsto  \left\{\text{the groupoid of ~} \XX\to T \left| \begin{array}{l}
  \text{$\bullet \XX\to T$ is the index one covering Deligne-Mumford stacks}\\
  \text{of a flat family of stable s.l.c. surfaces} \\
  \text{$\bullet$ the coarse moduli space $\sX$ of $\XX$ satisfies the conditions in} \\
  \text{(\ref{eqn_functor1}).} \\
     \end{array}  \right\}\right.
     \end{equation}
     
We have:

\begin{thm}\label{thm_index_one_covering}(\cite[Theorem 4.1]{Jiang_2021})
The functor $M^{\ind}=\overline{M}^{\ind}_{K^2, \chi}$ is representable and has  unramified diagonal, therefore $M^{\ind}$ is a fine Deligne-Mumford stack  with projective coarse moduli space.  Furthermore there is a canonical morphism 
$$f: \overline{M}^{\ind}_{K^2, \chi}\to M=\overline{M}_{K^2, \chi}$$
which is an \'etale morphism. 
The two Deligne-Mumford stacks have  same projective coarse moduli space. 
\end{thm}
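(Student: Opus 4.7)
The plan is to construct the morphism $f$ explicitly and to prove étaleness using Hacking's equivalence between $\qq$-Gorenstein deformations of s.l.c.~surfaces and deformations of their index one covering Deligne-Mumford stacks. First I would define $f\colon M^{\ind}\to M$ on the groupoid $M^{\ind}(T)$ by taking coarse moduli fibrewise: a family $\XX\to T$ of index one covering stacks is sent to its coarse moduli space $\sX\to T$. By construction of the groupoid $M^{\ind}$ in (\ref{eqn_functor2}), $\sX\to T$ is a flat family of stable s.l.c.~surfaces with the prescribed invariants, so it lies in $M(T)$. Conversely, given $\sX\to T$ in $M(T)$, the relative index one covering stack is built canonically from the relative dualizing sheaf $\omega_{\sX/T}$ and its reflexive tensor powers: one forms the relative cyclic cover $\sZ=\spec_{\sO_{\sX}}(\sO_{\sX}\oplus \omega_{\sX/T}\oplus\cdots\oplus \omega_{\sX/T}^{[N-1]})$ with algebra structure determined locally by an isomorphism $\omega_{\sX/T}^{[N]}\cong \sO_{\sX}$, and then $\XX=[\sZ/\mu_N]$. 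This is functorial in $T$ and provides an essential inverse to $f$ on objects.

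Second, representability and the unramified diagonal of $M^{\ind}$ follow by comparison with $M$. The known representability of $M$ as a proper Deligne-Mumford stack (\cite{Kollar-Book}) together with the canonical construction $\sX\mapsto\XX$ gives an equivalence on objects; the diagonal of $M^{\ind}$ is controlled by isomorphisms of the stacky covers, which are determined by isomorphisms of the underlying s.l.c.~surfaces together with the finite group $\mu_N$ acting through automorphisms of the cover, so the diagonal is unramified. This yields that $M^{\ind}$ is a Deligne-Mumford stack with a projective coarse moduli space, inherited from that of $M$.

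Third, étaleness of $f$ is the deformation-theoretic content. For any $\cc$-algebra $A$, finite $A$-module $J$, and family $\sX/A$ in $M(A)$ with corresponding $\XX/A$ in $M^{\ind}(A)$, Hacking's theorem identifies $T_{\QG}^i(\sX/A,J)$ with $\Ext^i(\ll^{\bullet}_{\XX/A},\sO_{\XX}\otimes_{A}J)$ for $i=1,2$. In particular the first-order deformation and obstruction spaces at any point of $M^{\ind}$ agree with those at the image point in $M$, and the Kodaira-Spencer maps coincide. This gives formal étaleness of $f$, and combined with finite type gives that $f$ is étale. Since $f$ is bijective on isomorphism classes of geometric points via the canonical inverse construction, the induced map on coarse moduli spaces is an isomorphism. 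The main obstacle is the functoriality of the index one cover construction in families: the local isomorphism $\omega_{\sX/T}^{[N]}\cong \sO_{\sX}$ is not canonical, so one must patch the local covers together stack-theoretically in order to obtain a well-defined $\XX/T$ globally, which is precisely what is accomplished by viewing the construction as producing an index one $\mu_N$-gerbe and appealing to Hacking's comparison of deformation functors.
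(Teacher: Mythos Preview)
The paper does not actually contain a proof of this theorem: it is stated with the citation \cite[Theorem 4.1]{Jiang_2021} and no argument is given here, so there is nothing in this paper to compare your proposal against. Your outline is a reasonable sketch of the standard argument---construct $f$ by passing to coarse moduli, use the canonical index one cover to go back, and invoke Hacking's identification of $\qq$-Gorenstein deformations with deformations of the covering stack to obtain formal \'etaleness---but a full verification would require consulting \cite{Jiang_2021} rather than the present paper.
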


The index one covering Deligne-Mumford stack $M^{\ind}$ is a fine moduli stack, therefore there exists a universal family 
$p^{\ind}: \sM^{\ind}\rightarrow M^{\ind}$, which is a projective, flat and relative Gorenstein morphism between Deligne-Mumford stacks. 
Let 
$$E^{\bullet}_{M^{\ind}}:=Rp^{\ind}_{*}(\ll^{\bullet}_{\sM^{\ind}/M^{\ind}}\otimes \omega_{\sM^{\ind}/M^{\ind}})[-1]$$
where $\ll^{\bullet}_{\sM^{\ind}/M^{\ind}}$ is the relative cotangent complex of $p^{\ind}$, and $\omega_{\sM^{\ind}/M^{\ind}}$ is the relative dualizing sheaf 
of $p^{\ind}$ which is a line bundle.   We have  the moduli of projective Deligne-Mumford stacks satisfying the condition in  \cite[Proposition 6.1]{BF}.  
Thus the Kodaira-Spencer map $\ll^{\bullet}_{\sM^{\ind}/M^{\ind}}\to (p^{\ind})^{*}\ll_{M^{\ind}}^{\bullet}[1]$ induces an obstruction theory 
\begin{equation}\label{eqn_OT}
\phi^{\ind}: E^{\bullet}_{M^{\ind}}\to \ll_{M^{\ind}}^{\bullet}
\end{equation}
on $M^{\ind}$, see \cite[Theorem 4.5]{Jiang_2021}.

The calculation in Proposition \ref{prop_elliptic_slc} implies the following result:

\begin{thm}\label{thm_M_BF_LT}
Let $M:=\overline{M}_{K^2, \chi}$ be the main  connected component of the moduli stack of s.l.c. surfaces containing smooth surfaces with $K_S^2=K^2, \chi(\sO_S)=\chi$, and let $M^{\ind}\to M$ be the index one covering Deligne-Mumford stack. 

If there exist s.l.c. surfaces $X$ on the boundary of 
$M$ such that $X$ contains the following surface singularities: 
\begin{enumerate}
\item simple elliptic singularities $(X,0)$ with embedded dimension $\ge 6$, or  the $\zz_2, \zz_3, \zz_4, \zz_6$-quotient of simple elliptic   singularities $(X,0)$ with local embedded dimension $\ge 6$;
\item cusp germ singularities $(X,0)$ with local embedded dimension $\ge 6$, or the $\zz_2$-quotient cusp  singularities $(X,0)$ with local embedded dimension $\ge 6$.
\end{enumerate} 
then the higher obstruction spaces $T_{\QG}^i(X)\neq 0$ for $i\ge 3$. Thus there is no Li-Tian and Behrend-Fantechi style perfect obstruction theory for the moduli stack $M$, and there is  no such style virtual fundamental class on 
$M$. 
\end{thm}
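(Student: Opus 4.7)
The plan is to derive the theorem in two steps: first to confirm the non-vanishing of the higher obstruction spaces $T_{\QG}^i(X)$ for $i \ge 3$ under the hypotheses, then to deduce that this non-vanishing precludes a Behrend-Fantechi style perfect obstruction theory (and hence a virtual class) on $M$. For the first step I would invoke Proposition \ref{prop_elliptic_slc} directly for each of the two classes of singularities listed. Both simple elliptic and cusp singularities are minimally elliptic, so Theorem \ref{thm_ellitptic_surface_singularities} gives $\dim_{\cc}(\sT_X^3/\m\sT_X^3) = \tfrac{1}{6}(n+1)(n-3)(n-4)$ and $\dim_{\cc}(\sT_X^4/\m\sT_X^4) = \tfrac{1}{12}(n+1)(n-4)(n-3)(n-2)$, both strictly positive once $n \ge 5$, i.e.\ once the local embedded dimension $n+1 \ge 6$. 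For the $\mu_N$-quotient cases ($N = 2, 3, 4, 6$), the index one cover $Z \to X$ is a Gorenstein s.l.c.\ surface with either a simple elliptic or a cusp singularity at the preimage point, and $\qq$-Gorenstein deformations of $X$ are equivalent to deformations of $\XX = [Z/\mu_N]$; Theorem \ref{thm_ellitptic_surface_singularities} applied on $Z$ together with taking $\mu_N$-invariants yields the analogous non-vanishing for $\sT_{\QG}^i(X)$. Finally, because $\sT_{\QG}^q(X)$ is supported at the isolated singular point for $q \ge 2$, the local-to-global spectral sequence from \S\ref{subsec_slc} collapses at $E_2$ to give $T_{\QG}^i(X) = H^0(X, \sT_{\QG}^i(X)) \ne 0$ for $i = 3, 4$.

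For the second step I would argue by contradiction. Suppose there is a Behrend-Fantechi perfect obstruction theory $\phi^{\ind}: E^{\bullet}_{M^{\ind}} \to \ll^{\bullet}_{M^{\ind}}$ with $E^{\bullet}_{M^{\ind}}$ of amplitude $[-1,0]$. The standard formalism then implies $\tau_{\ge -1}\ll^{\bullet}_{M^{\ind}}$ is quasi-isomorphic (pointwise) to $E^{\bullet}_{M^{\ind}}$, so that dualising and restricting to a closed point $[X] \in M^{\ind}$ forces all obstruction-theoretic groups at $[X]$ to live in a single $\Ext^1$ of a two-term complex; in particular one would conclude $T_{\QG}^i(X) = 0$ for $i \ge 3$, contradicting step one. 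Since the \'etale morphism $f: M^{\ind} \to M$ of Theorem \ref{thm_index_one_covering} identifies perfect obstruction theories on the two stacks under pullback, the obstruction descends to $M$, so no Li-Tian/Behrend-Fantechi virtual fundamental class can be constructed on $M$.

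The hard part will be making the implication \emph{amplitude-$[-1,0]$ perfect obstruction theory $\Rightarrow$ vanishing of $T_{\QG}^i(X)$ for $i \ge 3$} airtight at the stack level. Concretely, one must show that when the moduli functor is controlled by a two-term complex of vector bundles, Illusie's obstruction classes to extending successive infinitesimal $\qq$-Gorenstein deformations of $X$ over Artinian bases are literally computed by groups $h^{i}$ of the dual of $E^{\bullet}_{M^{\ind}}|_{[X]}$, which vanish for $i \ge 2$. This requires careful compatibility between the relative cotangent complex of the universal family $p^{\ind}: \sM^{\ind} \to M^{\ind}$, the Kodaira-Spencer map, and the deformation theory of $\XX$ over $\cc$, essentially a refinement of \cite[Proposition 6.1]{BF} in the presence of non-trivial higher cotangent cohomology. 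Once this compatibility is in place, the contradiction with step one is immediate.
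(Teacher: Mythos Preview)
Your two-step outline matches the paper's argument: invoke Proposition~\ref{prop_elliptic_slc} (ultimately Theorem~\ref{thm_ellitptic_surface_singularities} plus the local-to-global spectral sequence) for the non-vanishing of $T_{\QG}^i(X)$ with $i\ge 3$, and then conclude that the obstruction theory cannot be perfect of amplitude $[-1,0]$. So the overall strategy is the same.

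Where you diverge is in Step~2, and you are making your life harder than the paper does. You set up a contradiction against an \emph{arbitrary} Behrend--Fantechi perfect obstruction theory, and this is exactly the ``hard part'' you flag: tying the pointwise cohomology of an unspecified two-term complex to the intrinsic groups $T_{\QG}^i(X)$ requires a nontrivial compatibility argument. The paper does not attempt this. It works only with the \emph{specific} complex
\[
E^{\bullet}_{M^{\ind}}=Rp^{\ind}_{*}\bigl(\ll^{\bullet}_{\sM^{\ind}/M^{\ind}}\otimes\omega_{\sM^{\ind}/M^{\ind}}\bigr)[-1]
\]
already constructed in (\ref{eqn_OT}); by Grothendieck duality its restriction to $[X]$ has cohomology $T_{\QG}^i(X)$ essentially by definition, so the non-vanishing for $i\ge 3$ immediately says this particular $E^{\bullet}_{M^{\ind}}$ is not perfect in $[-1,0]$. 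In other words, the theorem (as the paper reads it) is a statement about the canonical obstruction theory (\ref{eqn_OT}), not about every conceivable one, and with that reading your ``hard part'' evaporates. Your extra remark about transporting the conclusion along the \'etale map $f:M^{\ind}\to M$ is a reasonable addition that the paper leaves implicit.
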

\begin{proof}
For a s.l.c. surface $X$, and its index one covering Deligne-Mumford stack $\pi: \XX\to X$, the restriction of the obstruction complex $E^{\bullet}_{M^{\ind}}$ to $X$ calculates the groups 
$T_{\QG}^i(X)$ for $i\ge 1$.   From Proposition  \ref{prop_elliptic_slc}, for such s.l.c. surface singularities, $T_{\QG}^i(X)\neq 0$ for $i\ge 3$, i.e., the higher obstruction spaces do not vanish.  
Therefore the complex $E^{\bullet}_{M^{\ind}}$ is not perfect with amplitude contained in $[-1,0]$.    The obstruction theory (\ref{eqn_OT}) is not perfect in the sense of Behrend-Fantechi and Li-Tian, which implies there is no 
Behrend-Fantechi and Li-Tian style virtual fundamental class on $M^{\ind}$. 
\end{proof}


\end{document}